\documentclass[reqno,12pt]{amsart}
\usepackage{a4wide,color,eucal,enumerate,mathrsfs}
\usepackage[normalem]{ulem}
\usepackage{amsmath,amssymb,epsfig,amsthm} 
\usepackage[latin1]{inputenc}
\usepackage{psfrag}
\usepackage{hyperref,cleveref,tikz}

\numberwithin{equation}{section}

\newtheorem{theorem}{Theorem}[section]
\newtheorem{corollary}[theorem]{Corollary}
\newtheorem{proposition}[theorem]{Proposition}
\newtheorem{lemma}[theorem]{Lemma}
\newtheorem{definition}[theorem]{Definition}

\theoremstyle{remark}
\newtheorem{remark}[theorem]{Remark}

\title[Stable solutions may have nonconvex superlevel sets]{Strictly stable solutions in uniformly convex planar domains may have nonconvex superlevel sets}
\author{Yi Ru-Ya Zhang}
\dedicatory{Dedicated to Xavier Cabr\'e on the occasion of his 60th birthday}
\date{\today}

\address{State Key Laboratory of Mathematical Sciences, Academy of Mathematics and Systems Science, Chinese Academy of Sciences, Beijing 100190, China}
\address{Institute of mathematics, Academy of Mathematics and Systems Science, the Chinese Academy of Sciences, Beijing 100190, China}
\email{yzhang@amss.ac.cn}
 \thanks{The author is funded by the National Key R\&D Program of China (Grant No. 2025YFA1018400 \&  No. 2021YFA1003100), NSFC Grant No. 12288201 \& No. 12571128, the Chinese Academy of Sciences, and CAS Project for Young Scientists in Basic Research, Grant No. YSBR-031. }

\subjclass[2020]{35J61, 35B50, 35J25}
\keywords{stable solutions; semilinear elliptic equations;  convex level sets; uniformly convex domains; saddle-node bifurcation}
\begin{document}

\begin{abstract}
We construct smooth, uniformly convex planar domains that admit minimal, strictly stable solutions of a semilinear Dirichlet problem whose superlevel sets are nonetheless nonconvex. The class of admissible nonlinearities includes, in particular, two prototypical cases: the Gelfand-type nonlinearity $e^u$ and the family of shifted power-type nonlinearities $(a+u)^p$, where $a>0$ and $p>1$. By applying the elementary scaling properties of the Dirichlet problem, we also show that the same lack of convexity of superlevel sets holds for the corresponding parameter-dependent equations. These results provide a negative answer to a question posed by Brezis, who inquired whether the stability of a solution necessarily entails quasiconcavity for these prototypical stable configurations.
\end{abstract}

\maketitle

\section{Introduction}

Let $\Omega\subset \mathbb R^2$ be a bounded smooth domain.  We consider classical solutions of
\begin{equation}\label{eq:intro-dirichlet}
\begin{cases}
-\Delta u=f(u) & \text{in }\Omega,\\
u>0 & \text{in }\Omega,\\
u=0 & \text{on }\partial\Omega,
\end{cases}
\end{equation}
where $f:[0,\infty)\to(0,\infty)$ is a prescribed nonlinearity.  For
$t\in\mathbb R$ we write
$$
        \Omega_t:=\{x\in\Omega:u(x)>t\}
$$
for the superlevel set of $u$.  The function $u$ is called
\emph{quasiconcave} if every superlevel set $\Omega_t$ is convex.

A basic question in the qualitative theory of elliptic equations asks whether convexity of the domain forces convexity of the level sets.  In the present
setting this can be phrased as follows: If $\Omega$ is convex and $f\ge0$, must every positive solution of \eqref{eq:intro-dirichlet} be quasiconcave?  This
question is natural from several viewpoints: comparison principles, the maximum principle on geometric quantities, rearrangement inequalities, and the
Brunn--Minkowski theory all suggest that convexity of the ambient set should be inherited by distinguished elliptic objects.

Many classical results support this expectation.  Makar-Limanov proved the
quasiconcavity of the torsion function in convex planar domains \cite{M1971}.
Brascamp and Lieb proved log-concavity, hence quasiconcavity, for the first Dirichlet eigenfunction in a convex domain \cite{BL1975}.  Lions subsequently formulated the broad conjecture that quasiconcavity should hold for semilinear problems with nonnegative H\"older right-hand side \cite[Remark 3]{L1981}.  For power-type equations of the form $f(u)=\lambda u^p$, $0<p<1$, the corresponding power-concavity was established in the plane by Keady \cite{Kea1985} and in higher dimensions by Kennington \cite{Ken1985}.  There are further positive results, using
concavity transformations, constant-rank methods, quasiconcave envelopes, or maximum principles for level-set curvature, including, for instance,  \cite{APP1981,CJ1982,Ko1983,K1983,K1985,K1986,KL1987,GP1993,L1994,ALL1997,CS2003,GG2004,BGMX2011,X2011,MSY2012};
see also Kawohl's monograph \cite{K19852} and the survey perspective in \cite{A2015}.  In particular, when $\Omega$ is a ball, the moving-plane method of Gidas, Ni, and Nirenberg implies that positive solutions are radial and monotone, and hence quasiconcave \cite{GNN1979}.

It is now known, however, that convexity of the domain alone is not sufficient in full generality.  Hamel, Nadirashvili, and Sire constructed smooth convex planar domains and smooth nonlinearities for which the associated solution is not quasiconcave \cite{HNS2016}.  Their examples show that the unrestricted form of Lions' question is false.  They leave open, however, the more rigid stable regime that arises naturally in the theory of minimal solutions and in
Brezis' problem. See also the potential nonconvexity of level sets for mean curvature equations \cite{W2014}, the recent results on the breakdown of level set convexity for Robin torsion functions \cite{LWY2025}, and the construction of positive, monotone (and hence stable) solutions to semilinear elliptic PDEs exhibiting non-convex superlevel sets (associated with non-convex nonlinearities) via the incompressible Euler system \cite{GRXX2025}. 

We recall the stability notion.  A solution of \eqref{eq:intro-dirichlet} is
\emph{stable} if
\begin{equation}\label{stable}
\int_\Omega |\nabla\phi|^2\,dx-
\int_\Omega f'(u)\phi^2\,dx\ge0
\qquad\text{for every }\phi\in C_c^1(\Omega).
\end{equation}
It is \emph{strictly stable} if
$$
 \lambda_1(-\Delta-f'(u),\Omega)>0,
$$
where $\lambda_1$ denotes the first Dirichlet eigenvalue.  For the usual parameterized
problem $-\Delta u=\lambda f(u),$ 
the corresponding stability inequality is
obtained by replacing $f'(u)$ in \eqref{stable} with $\lambda f'(u)$.

Brezis  asked in \cite[Open Problem 3]{B2003} whether stability restores convexity of level sets for the model
nonlinearities appearing in the Gelfand problem and in the power problem: 

\smallskip
{\it If $f(u)=e^u$ or $f(u)=(1+u)^p$, and $u$ is a
stable solution in a convex domain, must $u$ be quasiconcave? }
\smallskip

This paper shows that the stable version also fails, even in dimension two and even for minimal strictly stable solutions. From a historical perspective, a seminal result of Cabr\'e and Chanillo \cite{CC1998} provided strong evidence in favor of a positive resolution: In a uniformly convex planar domain, every stable solution with a nonnegative right-hand side possesses a unique critical point, and the Hessian at this point is strictly negative definite. As a consequence, all sufficiently high superlevel sets are smooth, closed, and uniformly convex. The uniqueness of the critical point was subsequently extended to arbitrary convex planar domains by De Regibus, Grossi, and Mukherjee \cite{DGM2021}. Therefore, if a counterexample exists, the failure of convexity must occur at an intermediate level, not near the maximum, and cannot be explained by multiple critical points.

The purpose of this paper is to construct such a negative answer.  We prove that, even in a smooth uniformly convex planar domain, and even for the minimal strictly stable solution, a superlevel set may fail to be convex for the
Gelfand and shifted-power nonlinearities.

\subsection{The one-dimensional fold condition}

The construction is organized according to a one-dimensional saddle-node bifurcation.  Let
$$
  f\in C^3([0,\infty)),\qquad   f>0,   \qquad f'\ge0,  \qquad f''\ge0.
$$
For each amplitude $A>0$, let $U_A$ be the even solution of
$$
 -U_A''(\xi)=f(U_A(\xi)),  \qquad U_A(0)=A,   \qquad U_A'(0)=0,
$$
on the maximal interval on which it remains positive.  If $U_A$ first reaches
zero at $\xi=h(A)>0$, then
$$
  U_A(h(A))=0, \qquad U_A(\xi)>0\quad \text{for } \ 0\le \xi<h(A).
$$
Writing
$$
  F(s):=\int_0^s f(\tau)\,d\tau,
$$
the half-width is given by the map
$$
h(A)=\int_0^A\frac{ds}{\sqrt{2(F(A)-F(s))}};
$$
see Lemma~\ref{lem:time-map} below. 

\begin{definition}\label{def:fold-admissible}
Let
$$
  f\in C^3([0,\infty)),\qquad   f>0,   \qquad f'\ge0,  \qquad f''\ge 0.
$$
We say that $f$ is \emph{fold-admissible} if there exists $A_c>0$ such that $h$ is $C^3$ in a neighborhood of $A_c$,
$$
   h'(A_c)=0,  \qquad h''(A_c)<0,
$$
and the one-dimensional lower branch is stable near $A_c$ in the following
sense.  Let $I_A:=(-h(A),h(A))$ and let
$$
 \mathcal L_A:=-\frac{d^2}{d\xi^2}-f'(U_A(\xi))
$$
be the Dirichlet linearized operator on $I_A$.  Then there exists $\delta_0>0$
such that
$$
 \lambda_1(\mathcal L_A,I_A)>0  \qquad\text{for } \ A\in(A_c-\delta_0,A_c),
$$
whereas $ \lambda_1(\mathcal L_{A_c},I_{A_c})=0.$
\end{definition}

Thus, a nonlinearity is fold-admissible if  its lower stable branch terminates at a nondegenerate fold.  This is the only structural hypothesis used in the general theorem below. Similar phenomena have been studied, for instance,  in a series of work by Crandall--Rabinowitz \cite{CR1973,CR1975} for the parameterized
problem; however the parameter used here is the amplitude $A$, rather than $\lambda$. 

\subsection{Main theorem}

Throughout, a smooth bounded planar domain is called \emph{uniformly convex} if its boundary curvature is bounded below by a positive constant.
A positive classical solution $u$ of \eqref{eq:intro-dirichlet} is called
\emph{minimal} if $u\le v$ in $\Omega$ for every other positive classical
solution $v$ of the same Dirichlet problem.  Our main result is the following.

\begin{theorem}\label{thm:main}
Let $f$ be fold-admissible.  Then there exist a bounded smooth uniformly convex
domain $\Omega\subset\mathbb R^2$, a number $t_0>0$, and the minimal solution $u$
of \eqref{eq:intro-dirichlet} such that $u$ is strictly stable while
$$
        \Omega_{t_0}=\{x\in\Omega:u(x)>t_0\}
$$
is not convex.
\end{theorem}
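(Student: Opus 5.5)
We build $\Omega$ as a long, thin-in-the-middle "stadium-like" uniformly convex domain, using the one-dimensional fold of Definition~\ref{def:fold-admissible} as the engine. Consider a domain of the form
$$
\Omega_\varepsilon=\{(x,\xi):\ |\xi|<H_\varepsilon(x),\ |x|<L\},
$$
where the half-width profile $H_\varepsilon$ is slowly varying. We take $H_\varepsilon(x)=\eta(\varepsilon x)$, with $\eta$ concave and even, which keeps the domain convex. We choose $\eta$ to have a strict local minimum at $0$ of width close to $h(A_c)$ after a small perturbation. Pure concavity forbids an interior minimum, so instead we make $\eta$ nearly constant, equal to $h(A_c)-\delta$, on a long plateau, with strictly concave caps at the ends. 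Uniform convexity is then enforced by adding a tiny concave correction of curvature $\sim\varepsilon^2$.

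In the slow-variation regime, the minimal solution is approximated by the adiabatic profile
$$
u(x,\xi)\approx U_{A(x)}(\xi),\qquad \text{where } A(x) \text{ is the lower-branch amplitude with } h(A(x))=H_\varepsilon(x).
$$
This amplitude is well defined for $H<h(A_c)$ near the fold by the implicit function theorem, since $h'\neq 0$ on $(A_c-\delta_0,A_c)$. The key mechanism is the fold: $h(A)\approx h(A_c)+\tfrac12 h''(A_c)(A-A_c)^2$, so
$$
A(x)\approx A_c-\sqrt{2\,(h(A_c)-H(x))/|h''(A_c)|}.
$$
The map $H\mapsto A$ has infinite slope at $h(A_c)$. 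Consequently, a very small but concave bump in $H$ near its maximum produces an amplitude $A(x)$ with a sharp square-root-type profile. Compare this with the level set $\{U_{A(x)}(\xi)>t\}$, whose half-width $\xi_t(x)$ solves $U_{A(x)}(\xi_t)=t$. For $t$ just below $\max A$, $\xi_t(x)$ behaves like $\sqrt{A(x)-t}$ composed with a square root in $H$. We arrange $H$ near the plateau so that $x\mapsto \xi_t(x)$ fails to be concave, for instance by building in two nearly maximal plateau regions joined by a slight concave dip in curvature. There the amplitude $A(x)$ dips with a cusp-like amplification, while $H$ remains concave.

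This yields a non-convex superlevel set at an intermediate level $t_0$. It does not contradict Cabr\'e--Chanillo, because the nonconvexity sits at a level below the top and the maximum point remains unique. To arrange this, we place the global maximum of $H$ (hence of $A$) at one location, and a nearly equal but slightly concave shoulder elsewhere. The ``neck'' of $\{A>t_0\}$ then forces the level set to pinch.

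The rigorous steps, in order, are as follows.
\begin{enumerate}[(i)]
\item Use Lemma~\ref{lem:time-map} and the fold condition to get the expansions of $h$ and of $\lambda_1(\mathcal L_A,I_A)$ near $A_c$. In particular, $\lambda_1\gtrsim A_c-A$ on the lower branch.
\item Build the approximate solution $w=U_{A(\varepsilon x)}(\xi)$, suitably rescaled in $\xi$ to fit the boundary exactly. Compute the residual, which is $O(\varepsilon^2)$ times derivatives of $A$.
\item Solve for the correction by a contraction argument. We invert the linearized operator using the fiberwise spectral gap from (i), uniformly as long as $\min_x(A_c-A(x))\gg\varepsilon^{\alpha}$; this yields a solution $u$ with $\|u-w\|_{C^2}$ small compared to the geometric effect. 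The same gap yields strict stability of $u$.
\item Prove minimality. By the strict stability and $f>0$, $f''\ge0$, the solution constructed lies on the branch obtained by continuation from $0$ of $-\Delta u=\lambda f(u)$. We would argue via the standard fact that a strictly stable solution below which the minimal solution lies must coincide with it; we use sub/supersolution iteration from $0$ bounded by $u$, then uniqueness of stable solutions below a given one for convex $f$.
\item Verify nonconvexity of $\Omega_{t_0}$ from the $C^2$-closeness of $u$ to $w$ and the explicit non-concavity of $\xi_{t_0}(x)$.
\end{enumerate}

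The main obstacle is the competition in (iii)/(v). The geometric amplification requires $A(x)$ to come close to $A_c$. There, however, the spectral gap degenerates like $A_c-A$, so the error bound deteriorates. We must choose scales so that $\varepsilon^2/(A_c-A)^{3/2}$, which is roughly the residual over the gap times the needed derivative sensitivity, is smaller than the size of the nonconvexity defect. We would first try to tune the distance to the fold $\delta$ as a small power of $\varepsilon$, and check that the non-concavity of $\xi_{t_0}$, of order $\delta^{-1/2}$ times the curvature of $H$, still dominates.
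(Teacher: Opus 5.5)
Your overall architecture (a slow channel just below the fold width, the adiabatic profile $U_{A_-(H)}$, a spectral gap from the fibres) matches the paper's. But the step that is supposed to produce the nonconvexity is wrong, and the concrete recipes you give cannot work.

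Near the fold, both $A_-(h)$ and $\mathcal Y_t(h)$ are increasing in $h$. So for concave $H$, which you need for a convex domain, the functions $x\mapsto A(x)$ and $x\mapsto \xi_t(x)=\mathcal Y_t(H(x))$ are quasiconcave. In particular $\{A>t_0\}$ is an interval and there is no ``neck'' or ``pinch''. Likewise, ``two nearly maximal plateaus joined by a dip'' is incompatible with concavity of $H$.

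More precisely, with $\Psi:=\mathcal Y_t$,
$$\xi_t''=\Psi''(H)\,(H')^2+\Psi'(H)\,H''.$$
The term you cite as the source of the defect (order $\delta^{-1/2}$ times the curvature of $H$) is $\Psi'(H)H''\le 0$, which favours concavity. On a plateau or at the maximum of $H$, which is where you place the action (and also for $t$ close to $\max A$), one has $H'=0$, so the level curve is locally concave there.

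The only destabilizing term is $\Psi''(H)(H')^2$. Here $\Psi''>0$ comes from the square-root law $\Psi(h)=Y_c-c_t\sqrt{h_c-h}+o(\sqrt{h_c-h})$ with $c_t>0$, and that sign requires $\omega_{A_c}>0$ as the first Dirichlet eigenfunction. To leading order this term wins exactly where $(H')^2>2(h_c-H)|H''|$, i.e.\ on a sloped piece of a concave profile whose osculating parabola would rise above $h_c$.

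This is the missing idea. The paper takes $H=h_c-\delta D_0$ with $D_0=1+2X+\tfrac12X^2$, which is convex while $\sqrt{D_0}$ is strictly concave. It uses a fixed level $t\in(0,A_c)$ and tests midpoint concavity at three points on this sloped piece. It then closes the domain with uniformly convex caps kept inside $\{|x_2|<h_*\}$ with $h_*<h_c$, so the channel never actually reaches the fold.

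Once this is in place, the competition in your last paragraph disappears. Fix $\delta$; the midpoint defect $G$ is then an $\varepsilon$-independent positive number in the slow variable. The spectral gap can be taken to be that of the one-dimensional profile of fixed width $h_*$, because the minimal solution lies below $\overline U(x_2)$ and $f'$ is nondecreasing.

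A barrier argument then gives $|v_\varepsilon-V|\le C\varepsilon^2$ in $C^0$. Together with the transversality $V_\eta<0$ at level $t$, this controls the upper heights at the three points. So you need no contraction argument, no $C^2$ closeness, no degenerating gap $\lambda_1\gtrsim A_c-A$, and no coupling $\delta\sim\varepsilon^\alpha$.

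Your global adiabatic ansatz also breaks down in the caps, where the width is not slowly varying. The paper avoids this by localizing with the exponentially decaying barriers $e^{-\kappa(X-X_L)/\varepsilon}\varphi_*$.

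Your minimality argument in (iv) is correct; it is the convexity/maximum-principle argument of Lemma~\ref{lem:profile-domination}. The paper, however, gets minimality and strict stability more directly, from monotone iteration below $\overline U(x_2)$ and vertical slicing.
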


For the nonlinearities in Brezis' question this gives the following concrete
consequence.

\begin{corollary}\label{cor:model}
Let either $f(s)=e^s$ or $f(s)=(a+s)^p$, where $a>0$ and $p>1$.  Then there
exist a bounded smooth uniformly convex domain $\Omega\subset\mathbb R^2$, a
number $t_0>0$, and a minimal solution $u$ of \eqref{eq:intro-dirichlet} such
that $u$ is strictly stable and $\{u>t_0\}$ is not convex.
\end{corollary}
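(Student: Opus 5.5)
We derive Corollary~\ref{cor:model} from Theorem~\ref{thm:main} by checking that both nonlinearities are fold-admissible in the sense of Definition~\ref{def:fold-admissible}. Both clearly satisfy $f>0$, $f'\ge0$, $f''\ge0$ on $[0,\infty)$. Smoothness of $h$ on $(0,\infty)$ comes from the substitution $s=At$, which gives
$$
h(A)=\int_0^1\frac{A\,dt}{\sqrt{2(F(A)-F(At))}}.
$$
Here $F(A)-F(At)=A(1-t)\int_0^1 f(At+\theta A(1-t))\,d\theta$, so the integrand is $(1-t)^{-1/2}$ times a smooth positive function of $(A,t)$, and $h$ is $C^\infty$ near any $A>0$. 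The plan has three steps: (i) show $h$ has a unique critical point $A_c$ with $h'>0$ on $(0,A_c)$ and $h''(A_c)<0$; (ii) deduce the spectral conditions from this; (iii) apply Theorem~\ref{thm:main}.

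\emph{Step (i): the time map.} For $f=e^s$ the solution is explicit, $U_A(\xi)=A-2\log\cosh(e^{A/2}\xi/\sqrt2)$. With $y=e^{A/2}$ this gives $h=\sqrt2\,g(y)$, where $g(y)=\operatorname{arccosh}(y)/y$. Then
$$
g'(y)=\frac{N(y)}{y^2},\qquad N(y)=\frac{y}{\sqrt{y^2-1}}-\operatorname{arccosh}y .
$$
One computes $N'(y)=-(y^2-1)^{-3/2}-(y^2-1)^{-1/2}<0$. Also $N\to+\infty$ as $y\to1^+$ and $N\to-\infty$ as $y\to\infty$. Hence $N$ has a unique zero $y_c$, and $g''(y_c)=N'(y_c)/y_c^2<0$. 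Since $A\mapsto y$ is an increasing diffeomorphism and $g'(y_c)=0$, we get $h'>0$ on $(0,A_c)$ and $h''(A_c)<0$.

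For $f=(a+s)^p$, put $V=a+U$, so that $-V''=V^p$ and $V(0)=B:=a+A$. Let $W$ be the normalized solution with $W(0)=1$, $W'(0)=0$. Then $V(\xi)=BW(B^{q}\xi)$ with $q=(p-1)/2$. Let $Z(m)$ be the first positive time at which $W=m$, for $m\in(0,1)$, and set $m=a/B$. This gives
$$
h=a^{-q}\varphi(m),\qquad \varphi(m)=m^qZ(m),\qquad Z(m)=\int_m^1\frac{ds}{\sqrt{2(G(1)-G(s))}},\qquad G(s)=\frac{s^{p+1}}{p+1}.
$$
Differentiating, $Z'(m)=-(2(G(1)-G(m)))^{-1/2}$ and $Z''(m)=-m^p(2(G(1)-G(m)))^{-3/2}<0$. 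So $Z$ is positive and strictly concave on $(0,1)$, hence strictly log-concave. Adding the concave function $q\log m$ shows $\log\varphi$ is strictly concave. Since $\varphi(0^+)=0=\varphi(1)$, $\varphi$ has a unique critical point $m_c$, and $\varphi''(m_c)=\varphi(m_c)(\log\varphi)''(m_c)<0$. The map $A\mapsto m=a/(a+A)$ is a decreasing diffeomorphism onto $(0,1)$. Therefore $h$ has a unique critical point $A_c$ with $h'>0$ on $(0,A_c)$, and at $A_c$ we have $h''=a^{-q}\varphi''(m_c)(dm/dA)^2<0$.

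\emph{Step (ii): spectral conditions.} Let $w=\partial_AU_A$. Then $\mathcal L_Aw=0$, $w(0)=1$, $w'(0)=0$, and $w$ is even. Differentiating $U_A(h(A))=0$ gives
$$
w(h(A))=-U_A'(h(A))\,h'(A).
$$
Since $U_A'(h(A))<0$, the value $w(h(A))$ has the sign of $h'(A)$. For small $A$ the interval $I_A$ is short and $f'(U_A)$ is bounded, so $\lambda_1(\mathcal L_A,I_A)>0$. The eigenvalue $\lambda_1$ depends continuously on $A$. If it vanished at some $A<A_c$, the first eigenfunction would be even, and by ODE uniqueness a multiple of $w$. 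It would then vanish at $h(A)$, which is impossible because $h'(A)\neq0$. Hence $\lambda_1>0$ on $(0,A_c)$, and by Sturm theory $w>0$ on $[0,h(A)]$ there. Passing to $A=A_c$, $w$ is nonnegative on $I_{A_c}$, vanishes at $\pm h(A_c)$, and is positive inside, since an interior zero would be a double zero and force $w\equiv0$. So $w$ is a positive Dirichlet eigenfunction with eigenvalue $0$, which gives $\lambda_1(\mathcal L_{A_c},I_{A_c})=0$.

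\emph{Step (iii).} Both nonlinearities are therefore fold-admissible, and Theorem~\ref{thm:main} yields the corollary. The main obstacle is the nondegeneracy $h''(A_c)<0$ together with uniqueness of the critical point. For the exponential case this is settled by the explicit formula, and for the power case by the log-concavity argument above. Everything else is standard Sturm--Liouville reasoning.
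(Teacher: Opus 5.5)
Your proposal is correct and follows the paper's route. As in the paper, you verify fold-admissibility of $e^s$ and $(a+s)^p$ through the time map (the content of Propositions~\ref{prop:exp-fold} and~\ref{prop:power-fold}) and then invoke Theorem~\ref{thm:main}; your variable $y=e^{A/2}$ is just $\cosh\mu$ in the paper's parametrization, and your $\varphi(m)=m^qZ(m)$ is, up to a constant factor, the paper's $x^\alpha J(x)$. The differences are only in two local arguments, both valid: you get uniqueness and nondegeneracy of the critical point from strict log-concavity of $\varphi$ rather than from monotonicity of $R(x)=x\Phi(x)/J(x)$, and you obtain $\lambda_1(\mathcal L_{A_c},I_{A_c})=0$ by showing $\omega_{A_c}$ is a positive Dirichlet eigenfunction, whereas the paper uses the two-sided bound ($\lambda_1\ge 0$ by continuity, $\lambda_1\le 0$ because $0$ is an eigenvalue).
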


 The verifications of fold-admissibility for these two families  are in Propositions~\ref{prop:exp-fold} and~\ref{prop:power-fold}.  Together with the
scaling statement in Corollary~\ref{cor:parameter-version}, Corollary
\ref{cor:model} applies to the parameterized equations
$-\Delta u=\lambda e^u$ and $-\Delta u=\lambda(1+u)^p$.

\subsection{Mechanism of the construction}

We next explain the geometric mechanism behind the proof.  Let $h_c:=h(A_c)$.
For a fixed level $t\in(0,A_c)$, let $Y_t(A)>0$ be the function determined by
$$
        U_A(Y_t(A))=t.
$$
If $A_-(h)$ denotes the inverse of the lower branch $A\mapsto h(A)$ and $Y_c:=Y_t(A_c)$, then the nondegenerate fold gives the expansion
\begin{equation}\label{eq:expansion}
        \mathcal Y_t(A_-(h)) =Y_c-c_t\sqrt{h_c-h}+o\bigl(\sqrt{h_c-h}\bigr)
        \qquad\text{as }h\to h_c^-,
\end{equation}
with $c_t>0$.  The sign and the square-root singularity are essential here.  This expansion tells  that, although the stable profiles themselves vary smoothly along the
lower branch away from $h_c$, the height of a fixed interior level can become extremely singular when expressed as a function of the interval half-width near the fold. 

We now extend this one-dimensional phenomenon to a thin and long domain. Let us introduce coordinates $(X,\eta)$ to represent points in the domain, whose central region, called the \emph{slow channel}, is given by
$$
\{(X,\,\eta)\colon |\eta| < H(X).\}
$$
The function $H$ is assumed to be concave and to attain values slightly below $h_c$. Consequently, the upper boundary of the slow channel is concave. By attaching smoothly varying, uniformly convex caps at the lateral ends, the overall domain becomes uniformly convex.

We write $\mathcal U_h(\eta):=U_{A_-(h)}(\eta)$. 
Then formally, the solution within the channel is represented by the slowly varying one-dimensional profile
$$
 V(X,\eta) := \mathcal U_{H(X)}(\eta).
$$
The upper vertical coordinate of the (formal) level set $\{V = t\}$ is roughly given by
$$
X\longmapsto \mathcal Y_t(H(X)).
$$
If we write $H(X) = h_c - D(X)$, then the leading-order term in \eqref{eq:expansion} becomes $-c_t \sqrt{D(X)}$. Consequently, we choose a positive convex $D$ for which $\sqrt D$ strictly concave near the selected midpoint. Since the leading term is $-c_t\sqrt D$, the resulting level-height function becomes strictly midpoint-convex there, and hence violates the concavity required
for the upper height function of a convex superlevel set. This is exactly the main idea by which the effective one-dimensional fold dominates, and ultimately overrides the convexity properties of the ambient domain.

The remaining analytical task is to pass from the formal profile $V$ to the genuine minimal solution $u_\epsilon$ in a very long, slowly varying domain. A barrier argument, together with the maximum principle, shows that, in the central region of the domain, 
$u_\epsilon(X/\epsilon,\eta)$
remains uniformly close to $V(X,\eta)$ as $\epsilon \to 0^+$. Since the formal level set exhibits a finite midpoint violation, the same violation persists for the actual level set of $u_\epsilon$. Strict stability then follows from the positivity of the one-dimensional spectral gap on the lower branch, together with a vertical slicing argument in the channel and a comparison with a suitably chosen stable supersolution.

This mechanism is different from previously known negative examples. For instance, 
Monneau and Shahgholian \cite{MS2005} constructed nonconvex levels in convex rings by passing
through a free-boundary limit \cite{A1989}.  Hamel, Nadirashvili, and Sire \cite{HNS2016} produced counterexamples in convex domains and convex rings for specially constructed (nonconvex) nonlinearities; stability is not part of their conclusion.  Gladiali--Grossi \cite{GG2022} and De Regibus--Grossi--Mukherjee \cite{DG2022} showed that
strip-like domains can produce many critical points, including for stable solutions in the latter work.  By contrast, our domain is smooth, bounded, simply connected, and uniformly convex; the nonlinearities are
positive, increasing, and convex; and the solution is minimal and strictly stable.  The nonconvexity is therefore not a consequence of topology, non-monotonicity of the reaction term, or multiple critical points, but of the
square-root response created by the stable one-dimensional fold. 

\subsection*{Organization}

Section~\ref{sec:preliminaries} contains the planar geometric preliminaries, the one-dimensional fold analysis, and the verification of fold-admissibility for the exponential and shifted-power nonlinearities.  In particular we prove the square-root expansion
\eqref{eq:expansion}. 

Section 3 constructs the domains and proves the counterexample. More specifically, Section~\ref{sec:domains} constructs the uniformly convex slow channels.  Section~\ref{sec:solution-stability} proves existence,
minimality, and strict stability of the relevant solution.  The maximum-principle approximation is proved in Section~\ref{sec:c0-approx}, and the midpoint
nonconvexity argument is completed in Section~\ref{sec:nonconvexity}.  The final
part records the parameterized version.

\medskip
\noindent{\bf Acknowledge:} The author would like to express his sincere gratitude to Alessio Figalli for introducing him to this problem in 2018, for many enlightening discussions over the subsequent years, and for his patience and generosity in listening to the author's mistakes, questions, and exploratory ideas. Although the present paper ultimately takes a different direction, the author's understanding of the problem has benefited greatly from these interactions.

\section{Preliminaries}\label{sec:preliminaries}

We first recall  the existence result for solutions  to the parameterized Dirichlet problem. 
This is only used to deal with the parameterized problem. 
\begin{theorem}[\cite{B2003, N2000}]\label{thm:existence}
Let $\Omega\subset\mathbb R^2$ be a bounded smooth domain and fix a nonlinearity $f$ which is positive, increasing, convex and superlinear
$$\frac{f(s)}{s}\to \infty.$$ 
Then there exists a constant $\lambda^*=\lambda^*(\Omega,\,f)\in(0,\,+\infty)$ so that, for every $0<\lambda< \lambda^*$, there exists a classical stable solution to
$$
\left\{
\begin{array}{ll}
-\Delta u=\lambda f(u) & \text{ in } \Omega,\\
u=0 &\text{ on } \partial \Omega.
\end{array} \right.
$$
When $\lambda=\lambda^*$, there is a weak solution $u^*\in L^1(\Omega)$.  When $\lambda>\lambda^*$, there exists no solution.     
\end{theorem}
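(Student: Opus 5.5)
The plan is the classical monotone-iteration / sub-supersolution scheme for the extremal-parameter problem; since the paper only uses this result as a black box for the parameterized version, I would follow the standard argument of Brezis--V\'azquez type as in the cited works.

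\emph{Step 1: existence for small $\lambda$ and the definition of $\lambda^*$.} Let $\zeta$ solve $-\Delta\zeta=1$ in $\Omega$ with $\zeta=0$ on $\partial\Omega$. For $M>0$ and $\lambda\le M/f(M\|\zeta\|_\infty)$, the function $\bar u=M\zeta$ is a supersolution, and $0$ is a subsolution because $f>0$. Monotone iteration $-\Delta u_{k+1}=\lambda f(u_k)$, $u_0=0$, yields an increasing sequence bounded by $\bar u$. Its limit is the minimal solution $u_\lambda$, which is classical by elliptic regularity. Set $\lambda^*=\sup\{\lambda:\text{a classical solution exists}\}>0$. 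If a solution exists at $\lambda_1$, it is a supersolution for every $\lambda<\lambda_1$, so the set of such $\lambda$ is an interval.

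\emph{Step 2: finiteness of $\lambda^*$.} Let $\varphi_1>0$ be the first Dirichlet eigenfunction with eigenvalue $\mu_1$. Superlinearity and positivity of $f$ give a constant $c>0$ with $f(s)\ge c\,s+c$ for all $s\ge0$; one may take $c$ so small that this holds and also large slopes are available. More precisely, for any $K$ there is $C_K$ with $f(s)\ge Ks-C_K$, while $f\ge f(0)>0$. Testing the equation against $\varphi_1$ gives
\[
\mu_1\int u\varphi_1=\lambda\int f(u)\varphi_1 .
\]
Combining $f(s)\ge \epsilon_0(1+s)$ for some $\epsilon_0>0$ (which follows from positivity and superlinearity) yields $\mu_1\int u\varphi_1\ge\lambda\epsilon_0\int(1+u)\varphi_1$, which is impossible once $\lambda\epsilon_0\ge\mu_1$. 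Hence $\lambda^*<\infty$, and no classical solution exists for $\lambda>\lambda^*$. To upgrade this to nonexistence of weak ($L^1$) solutions above $\lambda^*$, I would invoke the Brezis--Cazenave--Martel--Ramiandrisoa argument: a weak solution at $\lambda$ produces classical solutions for all smaller parameters, via the concave transformation $\Phi(u)$ with $\Phi'=f(\Phi)/f(u)$ after truncation.

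\emph{Step 3: stability.} The minimal solution $u_\lambda$ is stable. If $\lambda_1(-\Delta-\lambda f'(u_\lambda))<0$, the first eigenfunction $\psi$ gives $u_\lambda-\delta\psi$ as a strict supersolution for small $\delta$, by convexity (Taylor expansion of $f$). Iterating from $0$ below it then produces a solution strictly below $u_\lambda$, contradicting minimality.

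\emph{Step 4: the extremal solution.} For $u^*=\lim_{\lambda\uparrow\lambda^*}u_\lambda$, which exists since $u_\lambda$ is increasing in $\lambda$, I need a uniform $L^1$ bound. Testing with $\varphi_1$ and using superlinearity, namely $\lambda f(s)\ge 2\mu_1 s-C$ for $\lambda$ near $\lambda^*$, bounds $\int u_\lambda\varphi_1$ and $\int f(u_\lambda)\varphi_1$. Monotone convergence then gives a weak solution in $L^1(\Omega)$ (in the sense $\int u(-\Delta\phi)=\lambda^*\int f(u)\phi$ with $f(u^*)\,\mathrm{dist}(\cdot,\partial\Omega)\in L^1$).

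The main obstacle is the nonexistence of weak solutions for $\lambda>\lambda^*$, which requires the nonlinear transformation argument; for this I would simply cite \cite{B2003,N2000}.
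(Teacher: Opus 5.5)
The paper does not prove this statement. It quotes it from \cite{B2003,N2000} as background and never invokes it afterwards. The minimal solutions it actually uses are built directly in Proposition~\ref{prop:minimal-stability}, by the same monotone iteration as your Step~1 with the supersolution $\overline U(x_2)$; there, strict stability comes from slicing against the one-dimensional gap $\nu_*$ rather than from your minimality argument. Corollary~\ref{cor:parameter-version} then follows by scaling. Your sketch is the standard argument behind the cited works and is sound in outline, but three details need fixing.

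\emph{Step~3: convexity points the wrong way.} Convexity gives $f(u_\lambda-\delta\psi)\ge f(u_\lambda)-\delta f'(u_\lambda)\psi$. For $w=u_\lambda-\delta\psi$ this yields only the upper bound $-\Delta w-\lambda f(w)\le\delta|\lambda_1|\psi$.
\begin{itemize}
\item The strict supersolution property comes from the $C^1$ Taylor remainder instead. Since $f(u_\lambda-\delta\psi)-f(u_\lambda)+\delta f'(u_\lambda)\psi=o(\delta)\psi$ uniformly, you get $-\Delta w-\lambda f(w)\ge\tfrac12\delta|\lambda_1|\psi$ for small $\delta$.
\item You also need $w\ge0$, so that the iteration from $0$ stays below $w$. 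This follows from Hopf's lemma: $u_\lambda\ge c\operatorname{dist}(\cdot,\partial\Omega)$ and $\psi\le C\operatorname{dist}(\cdot,\partial\Omega)$.
\end{itemize}

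\emph{Step~4: testing with $\varphi_1$ gives only weighted bounds.} To get $u^*\in L^1(\Omega)$, and to pass to the limit in $\int u_\lambda(-\Delta\phi)$, add the estimate $\int_\Omega u_\lambda=\lambda\int_\Omega f(u_\lambda)\zeta\le C\int_\Omega f(u_\lambda)\varphi_1$, using $\zeta\le C\varphi_1$.

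\emph{Nonexistence step: the transformation is misstated.} The BCMR map is $\Phi_\varepsilon$ defined by $\int_0^{\Phi_\varepsilon(s)}dt/f(t)=(1-\varepsilon)\int_0^s dt/f(t)$, i.e.\ $\Phi_\varepsilon'=(1-\varepsilon)f(\Phi_\varepsilon)/f$. Without the factor $1-\varepsilon$, your ODE with $\Phi(0)=0$ is solved by $\Phi(s)=s$ and gains nothing. Since you cite this step anyway, the slip does not affect correctness.
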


\subsection{Planar geometry}
We next record some elementary planar facts used later in the construction. 
Let $K\subset\mathbb R^2$ be a set. 
In the physical coordinate space $\mathbb{R}^2$, we denote a   point by $(x_1, x_2)$. For any $\epsilon > 0$, we also introduce the \emph{slow coordinates} on $\mathbb{R}^2$ via the affine transformation
$$(x_1, x_2) \mapsto (X, \eta) \qquad \text{with} \quad X = \epsilon x_1,\ \eta = x_2.$$
In these coordinates, we  represent points in $\mathbb{R}^2$ as $(X,\eta)$, where $X$ denotes the horizontal coordinate and $\eta$ denotes the vertical coordinate. 

Define the horizontal projection of $K$ by
$$
\pi_X(K):=\{X\in\mathbb R:\text{there exists }\eta\in\mathbb R\text{ with }(X,\eta)\in K\}.
$$
When $\pi_X(K)$ is nonempty and each vertical section of $K$ is bounded from above, we define the upper height function
$$
\mathcal H_K(X):=\sup\{\eta\in\mathbb R:(X,\eta)\in K\},
\qquad X\in\pi_X(K).
$$

\begin{lemma}\label{lem:convex-height}
Let $K\subset\mathbb R^2$ be a non-empty convex set.  Assume every vertical section is bounded above. Then $\pi_X(K)$ is an interval and that $\mathcal H_K(X)<\infty$ for every $X\in\pi_X(K)$.  Then
$\mathcal H_K$ is concave on $\pi_X(K)$.
\end{lemma}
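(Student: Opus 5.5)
The statement has three parts, and I will prove them in order directly from the definition of convexity; no earlier result is needed.

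\emph{Step 1: $\pi_X(K)$ is an interval.} The projection $\pi_X$ is linear, and the linear image of a convex set is convex. The convex subsets of $\mathbb R$ are exactly the intervals, so $\pi_X(K)$ is an interval. It is non-empty because $K$ is non-empty.

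\emph{Step 2: $\mathcal H_K$ is finite.} Fix $X\in\pi_X(K)$. The vertical section $\{\eta:(X,\eta)\in K\}$ is non-empty by the definition of $\pi_X$. By hypothesis it is bounded above. Its supremum is therefore a real number, so $\mathcal H_K(X)<\infty$.

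\emph{Step 3: concavity.} Take $X_0,X_1\in\pi_X(K)$ and $\theta\in[0,1]$, and set $X_\theta:=(1-\theta)X_0+\theta X_1$. Fix $\varepsilon>0$. By the definition of the supremum, there are heights $\eta_0,\eta_1$ with $(X_i,\eta_i)\in K$ and $\eta_i>\mathcal H_K(X_i)-\varepsilon$ for $i=0,1$. Convexity of $K$ puts the point $\bigl(X_\theta,(1-\theta)\eta_0+\theta\eta_1\bigr)$ in $K$. Hence $X_\theta\in\pi_X(K)$, consistent with Step 1. Moreover
$$
\mathcal H_K(X_\theta)\ge(1-\theta)\eta_0+\theta\eta_1>(1-\theta)\mathcal H_K(X_0)+\theta\mathcal H_K(X_1)-\varepsilon .
$$
Letting $\varepsilon\to0^+$ gives the concavity inequality.

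None of these steps is a real obstacle. The only point needing care is that the supremum in Step 3 may not be attained, for instance when $K$ is not closed. The $\varepsilon$-approximation handles this, so no compactness or closedness of $K$ is required.

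In the application, $K$ will be a superlevel set $\Omega_{t_0}$, which is bounded. The contrapositive is then the form used later: if the upper height function of $\Omega_{t_0}$ fails midpoint concavity at some pair of points, then $\Omega_{t_0}$ is not convex.
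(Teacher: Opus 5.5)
Your proof is correct, and Step 3 is essentially the paper's argument: approximate both suprema to within $\varepsilon$, use convexity of $K$ to place the convex combination in $K$, then let $\varepsilon\to0^+$. The paper's statement reads almost as if the interval property of $\pi_X(K)$ and the finiteness of $\mathcal H_K$ were hypotheses, and its proof does not verify them, so your Steps 1 and 2 simply supply what the paper leaves implicit.
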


\begin{proof}
Let $X_1,X_2\in\pi_X(K)$ and let $\theta\in[0,1]$.  Set
$$
X_\theta:=\theta X_1+(1-\theta)X_2.
$$
We prove that
\begin{equation}\label{eq:concave-H}
    \mathcal H_K(X_\theta)\ge \theta \mathcal H_K(X_1)+(1-\theta)\mathcal H_K(X_2).
\end{equation}

Let $\delta>0$.  By the definition of the supremum, there exist
$\eta_1,\eta_2\in\mathbb R$ such that
$$
(X_1,\eta_1)\in K,\qquad (X_2,\eta_2)\in K,
$$
and
$$
\eta_1>\mathcal H_K(X_1)-\delta,\qquad \eta_2>\mathcal H_K(X_2)-\delta.
$$
Since $K$ is convex,
$$
\theta(X_1,\eta_1)+(1-\theta)(X_2,\eta_2) = (X_\theta,\theta\eta_1+(1-\theta)\eta_2)
$$
belongs to $K$.  Therefore
$$
\mathcal H_K(X_\theta)\ge \theta\eta_1+(1-\theta)\eta_2.
$$
Using the inequalities for $\eta_1$ and $\eta_2$, we obtain
$$
\mathcal H_K(X_\theta)
> \theta \mathcal H_K(X_1)+(1-\theta)\mathcal H_K(X_2)-\delta.
$$
Since $\delta>0$ was arbitrary, the desired concavity inequality \eqref{eq:concave-H} follows.
\end{proof}

We  use support functions  to close the long channel by smooth uniformly convex caps on the sides.   For an angle $\theta\in\mathbb R$, set
$$
 n(\theta):=(\cos\theta,\sin\theta),
 \qquad
 \tau(\theta):=(-\sin\theta,\cos\theta).
$$
Thus $n(\theta)$ is a unit vector and $\tau(\theta)=n'(\theta)$ is obtained by
rotating $n(\theta)$ counterclockwise by $\pi/2$.

Recall that, for a  compact convex set $K\subset\mathbb R^2$, its support function with respect to the origin is the function
$$
 s_K(\theta):=\sup_{P\in K} P\cdot n(\theta),
 \qquad \theta\in\mathbb R,
$$
where $P\cdot n(\theta)$ denotes the Euclidean inner product.  Conversely, a smooth $2\pi$-periodic function $s$ with positive radius of curvature determines a smooth uniformly convex curve, given by the following formula. For  background on support functions of convex bodies, see e.g. \cite[Section 2.5]{Schneider2013}.

\begin{lemma}\label{lem:support-function}
Let $s:\mathbb R\to\mathbb R$ be a $2\pi$-periodic $C^\infty$ function and assume
that
$$
 s(\theta)+s''(\theta)>0 \qquad\text{for every }\theta\in\mathbb R.
$$
Define
$$ \Gamma_s(\theta):=s(\theta)n(\theta)+s'(\theta)\tau(\theta),
 \qquad \theta\in\mathbb R.
$$
Then $\Gamma_s$ parametrizes a smooth uniformly convex closed curve.  Moreover,
$$
\Gamma_s'(\theta)=\bigl(s(\theta)+s''(\theta)\bigr)\tau(\theta),
$$
and the quantity
$$
 \rho(\theta):=s(\theta)+s''(\theta)
$$
is the radius of curvature at $\Gamma_s(\theta)$.  

Conversely, every smooth uniformly convex closed plane curve admits such a parametrization after choosing an origin and using its support function as a function of the outer normal angle.
\end{lemma}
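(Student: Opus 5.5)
The plan is to verify everything by direct differentiation using only $n'=\tau$ and $\tau'=-n$, and then obtain convexity and simplicity from a support-line argument rather than from general theorems on plane curves.

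\emph{Derivative and curvature.} Differentiating $\Gamma_s=s\,n+s'\tau$ gives
$$
\Gamma_s'=s'n+s\tau+s''\tau-s'n=(s+s'')\tau .
$$
Since $\rho=s+s''>0$, $\Gamma_s$ is a regular $C^\infty$ curve, and it is closed because $s$ is $2\pi$-periodic. Its unit tangent is $\tau(\theta)$, whose angle is $\theta+\pi/2$. Hence the curvature is
$$
\frac{d\theta}{ds_{\mathrm{arc}}}=\frac{1}{|\Gamma_s'|}=\frac{1}{\rho(\theta)},
$$
so $\rho$ is the radius of curvature. By compactness $\rho\le\max\rho<\infty$, so the curvature is bounded below by $1/\max\rho>0$.

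\emph{Support-line argument.} Fix $\theta$ and set $g(\varphi):=\Gamma_s(\varphi)\cdot n(\theta)$. Then
$$
g'(\varphi)=\rho(\varphi)\,\tau(\varphi)\cdot n(\theta)=\rho(\varphi)\sin(\theta-\varphi).
$$
This is positive on $(\theta-\pi,\theta)$ and negative on $(\theta,\theta+\pi)$. Therefore, within one period, $g$ attains its maximum only at $\varphi=\theta$, with value $g(\theta)=s(\theta)$ because $n\cdot n=1$ and $\tau\cdot n=0$. Three consequences follow:
\begin{enumerate}[(i)]
\item The whole curve lies in the half-plane $\{P\cdot n(\theta)\le s(\theta)\}$ and touches the line $\{P\cdot n(\theta)=s(\theta)\}$ exactly at $\Gamma_s(\theta)$.
\item $\Gamma_s$ is injective modulo $2\pi$: if $\Gamma_s(\varphi)=\Gamma_s(\theta)$, then $\varphi$ also maximizes $g$, so $\varphi\equiv\theta$.
\item Every point of the curve is an exposed point of $K:=\bigcap_\theta\{P\cdot n(\theta)\le s(\theta)\}$. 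So the simple closed curve $\Gamma_s$ is the boundary of the compact convex set $K$, and $s=s_K$.
\end{enumerate}
Together with the curvature bound, this gives a smooth uniformly convex closed curve.

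\emph{Converse.} Let $\gamma$ be a smooth uniformly convex closed curve bounding $K$. Positive curvature makes the outer-normal (Gauss) map a local diffeomorphism onto the circle. Because $\gamma$ is simple and convex, the turning number is $1$, so the Gauss map is a diffeomorphism. Write $p(\theta)$ for the point with outer normal $n(\theta)$; by the inverse function theorem $p$ is $C^\infty$. Then $s(\theta)=p(\theta)\cdot n(\theta)$ is exactly $s_K(\theta)$, since the supporting line with normal $n(\theta)$ touches $K$ at $p(\theta)$. Since $p'\parallel\tau$, we get $s'=p'\cdot n+p\cdot\tau=p\cdot\tau$, and hence $p=s\,n+s'\tau=\Gamma_s$. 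The first part then gives $\Gamma_s'=(s+s'')\tau$. Comparing with $p'=|p'|\tau$ (orientation chosen so that the normal angle increases) yields $s+s''=|p'|=1/\kappa>0$.

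The only step needing some care is the global one, namely injectivity and convexity. The strict unique-maximizer property of $g$ is designed to handle this cleanly, and I expect no real obstacle.
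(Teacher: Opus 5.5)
Your proof is correct. The computation $\Gamma_s'=(s+s'')\tau$, the identification of $\rho$ as the radius of curvature with the bound $\kappa\ge 1/\max\rho$, and the converse all match the paper. In the converse you write the point with outer normal $n(\theta)$ as $p(\theta)$, differentiate $s=p\cdot n$ to get $s'=p\cdot\tau$, and conclude $p=s\,n+s'\tau$.

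Where you differ is the global part of the forward direction. The paper only notes that the unit tangent $\tau(\theta)$ winds once monotonically around the circle. It then takes simplicity and convexity as a consequence, implicitly invoking the classical fact that a closed regular curve with positive curvature and rotation index one is simple and convex. You instead prove this directly from the height function $g(\varphi)=\Gamma_s(\varphi)\cdot n(\theta)$. Its derivative $\rho(\varphi)\sin(\theta-\varphi)$ forces a unique maximizer at $\varphi=\theta$. This gives you injectivity, the fact that the curve bounds a convex body $K$, and the identity $s=s_K$, with no appeal to an external theorem. The conclusion that $\Gamma_s$ really is the boundary of a convex domain is what Lemma~\ref{lem:cap-completion} relies on. Similarly, in the converse you justify that the Gauss map is a diffeomorphism and that $p\cdot n$ is the support function, which the paper simply asserts.

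The only step to tighten is (iii). Two things each need one line: exposedness in $K$ (not merely among curve points), and the inclusion $\partial K\subset\Gamma_s(\mathbb R)$. For the first, note that $\operatorname{conv}\Gamma_s(\mathbb R)$ and $K$ are compact convex sets with the same support function $s$, hence equal. So the face of $K$ in direction $n(\theta)$ is the convex hull of the curve points on the line $\{P\cdot n(\theta)=s(\theta)\}$, which is $\{\Gamma_s(\theta)\}$. For the second, any $Q\in\partial K$ lies on a supporting line with some normal $n(\theta)$. That line must be $\{P\cdot n(\theta)=s(\theta)\}$, so $Q=\Gamma_s(\theta)$.
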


\begin{proof}
Differentiating $\Gamma_s=s n+s'\tau$, and using $n'=\tau$ and $\tau'=-n$,
we obtain that
$$
 \Gamma_s' =s'n+s\tau+s''\tau-s'n  =\bigl(s+s''\bigr)\tau.
$$
Thus $\Gamma_s'$ never vanishes if $s+s''>0$.  The tangent direction is denoted by $\tau(\theta)$. Since $\theta$ increases from $0$ to $2\pi$, this tangent direction winds once monotonically around the unit circle.  The signed radius of curvature is therefore $\rho=s+s''$, which is positive by assumption.  Hence the curve is uniformly convex as the compactness of the boundary gives
$$\sup_\theta \rho<\infty. $$

For the converse, let a smooth uniformly convex curve be parametrized by its outer unit normal $n(\theta)$. Then its support function can be written as
\begin{equation}\label{eq:support-s}
     s(\theta)=\Gamma(\theta)\cdot n(\theta).
\end{equation}
Since $\Gamma'(\theta)$ is tangent to the curve, differentiating both sides of \eqref{eq:support-s} gives $$\Gamma(\theta)\cdot\tau(\theta)=s'(\theta).$$  
Therefore, combining with 
$$
 \Gamma(\theta)=s(\theta)n(\theta)+s'(\theta)\tau(\theta),
$$
the identity for $\Gamma'$ then identifies $s+s''$ with the radius of curvature.
\end{proof}

We also record the following consequence of the support-function description.

\begin{lemma}\label{lem:cap-completion}
Let $H\in C^\infty(J)$, where $J\subset\mathbb R$ is an open interval, and assume
$$
H(X)>0,\qquad H''(X)<0
\qquad\text{for every }X\in J.
$$
Let $J_0=[a_0,b_0]\subset\subset J$ be a compact subinterval, and let $h_*>0$ satisfy
$$
\sup_{X\in J_0}H(X)<h_*.
$$
Then there exists a bounded $C^\infty$ uniformly convex domain
$\mathcal O\subset\mathbb R^2$ such that
\begin{equation}\label{eq:cap-strip}
\overline{\mathcal O}\subset\{(X,\eta):|\eta|<h_*\},
\end{equation}
and
\begin{equation}\label{eq:exact-central-channel}
\mathcal O\cap (J_0\times\mathbb R)
= \{(X,\eta):X\in J_0,\ |\eta|<H(X)\}.
\end{equation}
Moreover, there exists an open interval $J_0'$ with 
$$J_0\subset\subset J_0'\subset\subset J$$
such that
$$
\partial\mathcal O \cap (J_0'\times \mathbb R)= \{(X,H(X)):X\in J_0'\} \cup \{(X,-H(X)):X\in J_0'\}.
$$
\end{lemma}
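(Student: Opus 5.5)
The plan is to build $\mathcal O$ through its support function, using Lemma~\ref{lem:support-function}. The upper boundary over $J_0'$ is the graph of the concave function $H$, and the lower boundary is the graph of $-H$. Both are smooth, uniformly convex arcs. First fix an interval $J_0'=(a_1,b_1)$ with $J_0\subset\subset J_0'\subset\subset J$ and $\sup_{\overline{J_0'}}H<h_*$; this is possible by continuity. Also fix a slightly larger interval $J_1=(a_2,b_2)$ with $J_0'\subset\subset J_1\subset\subset J$, on which $H''\le -\kappa<0$ and $H<h_*$.

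Choose the origin at $(\tfrac{a_2+b_2}{2},0)$. The outward normal of the upper arc at $(X,H(X))$ has angle
\[
\theta(X)=\tfrac{\pi}{2}+\arctan H'(X)\cdot(-1),
\]
that is, the normal is proportional to $(-H'(X),1)$. Since $H''<0$, the map $X\mapsto\theta(X)$ is a smooth strictly monotone diffeomorphism from $J_1$ onto an interval $\Theta_+\subset(0,\pi)$. Along this interval the support function is
\[
s(\theta)=(X,H(X))\cdot n(\theta).
\]
It is smooth, and $s+s''$ equals the radius of curvature $(1+H'^2)^{3/2}/|H''|$, which is positive. The reflection $\eta\mapsto-\eta$ gives the analogous data on $\Theta_-=-\Theta_+$.

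What remains is to extend $s$ from $\Theta_+\cup\Theta_-$, restricted to the normal angles of $J_0'$, to a smooth $2\pi$-periodic function with $s+s''>0$ that also keeps the curve inside the strip. I would work in the variable $\rho=s+s''$. A periodic $s$ with prescribed $\rho$ exists if and only if
\[
\int\rho\cos\theta\,d\theta=\int\rho\sin\theta\,d\theta=0,
\]
which is the closing condition. On the two gaps, near $\theta=0$ and near $\theta=\pi$, I would prescribe $\rho$ to be smooth and positive, gluing smoothly to the given $\rho$ on the normal images of $J_1\setminus J_0'$. There I am free to modify $\rho$, because only the exact boundary over $J_0'$ must be preserved; this gives smooth transition zones. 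Symmetry $\rho(-\theta)=\rho(\theta)$ makes the $\sin$ moment vanish. For the $\cos$ moment, I would add a positive bump near $\theta=0$ and another near $\theta=\pi$, with adjustable masses $m_0,m_\pi>0$. The bumps add $m_0-m_\pi$ to the $\cos$ moment (approximately, with an exact linear dependence), so some choice of $m_0,m_\pi>0$ cancels it.

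With $\rho$ fixed, solve $s''+s=\rho$. The solution is determined up to $a\cos\theta+b\sin\theta$, that is, up to translation. Choose the translation so that $s$ agrees with the original support function on the normal image of $J_0'$. Two things justify this. First, the Green-type formula $\Gamma(\theta)=\Gamma(\theta_0)+\int_{\theta_0}^{\theta}\rho\,\tau$ reconstructs the curve, so the upper arc over $J_0'$ is reproduced exactly once one point and the tangent angle match. Second, by the symmetry of $\rho$ and the choice of starting point, the lower arc is the mirror image; equivalently, the closing condition makes the traversal return to start. Lemma~\ref{lem:support-function} then yields a smooth uniformly convex closed curve bounding $\mathcal O$, and its boundary over $J_0'$ consists exactly of the two graphs. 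Convexity of $\mathcal O$ together with these graphs gives \eqref{eq:exact-central-channel}. The reason is that the vertical sections over $J_0$ are segments whose endpoints lie on $\partial\mathcal O$, and no other boundary points project into $J_0'$. This last fact follows because the normal angles in the gaps map to points with $X$ outside $J_0'$, by monotonicity of the horizontal coordinate along each arc.

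The main obstacle is enforcing \eqref{eq:cap-strip} while keeping the closing condition. The caps run through normal angles near $0$ and $\pi$, where $\eta$ is monotone between the two arcs. Hence $|\eta|$ on a cap is bounded by its values at the junctions, which are at most $\sup_{J_1}H<h_*$, provided the cap turns monotonically through $\theta\in(-\alpha,\alpha)$. That monotonic turning holds automatically, since $\eta'(\theta)=\rho\cos\theta>0$ there. So the strip condition reduces to the junction heights, and these are controlled by the choice of $J_1$. The delicate point is making the gluing bumps compatible with the exact matching. I expect to handle this by placing the bumps strictly inside the gap angles and checking that the horizontal extent of the caps only lengthens them, which is harmless.
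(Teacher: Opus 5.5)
Your route is more explicit than the paper's: you prescribe the radius of curvature $\rho=s+s''$ on the caps, impose the closing condition, and reconstruct the curve by $\Gamma(\theta)=\Gamma(\theta_0)+\int_{\theta_0}^{\theta}\rho\,\tau$. The paper simply asserts an abstract extension of the local support functions to a periodic $s$ with $s+s''>0$. Your final convexity and projection arguments for \eqref{eq:exact-central-channel} and for $\partial\mathcal O\cap(J_0'\times\mathbb R)$ match the paper's.

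However, the step placing the lower arc fails as stated. From $\rho(-\theta)=\rho(\theta)$ you only get $s(-\theta)-s(\theta)=\beta\sin\theta$ for some constant $\beta$. So the closed curve is symmetric about the line $\eta=-\beta/2$, not necessarily about $\eta=0$. After you translate so that the upper arc over $J_0'$ is reproduced, the lower arc is the graph of $-H-\beta$, and $\beta=0$ is an extra scalar condition on $\rho$. Let $\theta_R,\theta_L$ be the normal angles at $(b_1,H(b_1))$ and $(a_1,H(a_1))$. Then $\beta=0$ says the right cap has vertical span $\int_{-\theta_R}^{\theta_R}\rho\cos\theta\,d\theta=2H(b_1)$. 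The closing condition $\int_0^{2\pi}\rho\cos\theta\,d\theta=0$ is only the sum of this with the left-cap condition $\int_{\theta_L}^{2\pi-\theta_L}\rho\cos\theta\,d\theta=-2H(a_1)$, since the two graph arcs contribute $2(H(a_1)-H(b_1))$. You impose a single equation on the two masses $m_0,m_\pi$, which leaves a one-parameter family: raise both masses so the total $\cos$-moment stays zero, and $\beta$ changes. So ``the closing condition makes the traversal return to start'' does not put the lower arc at $\eta=-H$. The repair is to use $m_0$ and $m_\pi$ to impose the two cap-span conditions separately. Closing then follows, and the masses are positive provided the baseline $\rho$ on the caps is kept small.

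Second, your strip argument assumes each cap lies in a half-circle of normal angles on which $\cos\theta$ has a fixed sign. The outward normal $(-H',1)$ at $(X,H(X))$ has angle $\pi/2+\arctan H'(X)$; your formula has the sign of $\arctan H'$ reversed. Hence the right cap $(-\theta_R,\theta_R)$ lies in $(-\pi/2,\pi/2)$ only if $H'(b_1)\le 0$, and the left cap is monotone only if $H'(a_1)\ge 0$. The lemma does not assume that $H$ attains its maximum in $J_0'$. In fact, in the paper's application $H=h_c-\delta D_0$ with $D_0'=2+X>0$, so $H'<0$ on $J$. Then $\theta_L<\pi/2$, and $\eta'=\rho\cos\theta>0$ on $(\theta_L,\pi/2)$. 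The left cap therefore climbs above $H(a_1)$ before turning down, and its height is not controlled by the junction values.

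This is also repairable. After a short transition, keep $\rho$ small on the part of the cap where $\cos\theta$ pushes $|\eta|$ outward, so that the cap's maximal height stays below $h_*$. This must be arranged together with the two span conditions above; it is not automatic.
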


\begin{proof}
Choose a compact interval $J_1\subset\subset J$ such that
$$
J_0\subset\subset J_1, \qquad \sup_{X\in J_1}H(X)<h_*.
$$
The upper graph
$$
\gamma_+(X):=(X,H(X)),\qquad X\in J_1,
$$
has curvature
$$
\kappa_+(X)=\frac{-H''(X)}{(1+H'(X)^2)^{3/2}}>0
$$
when it is oriented as the upper boundary of the region below it. Similarly, the lower graph
$$
\gamma_-(X):=(X,-H(X)),\qquad X\in J_1,
$$
has positive curvature when it is oriented as the lower boundary of the region above it.

We now use the support-function extension for strictly convex arcs. Namely, finitely many pairwise disjoint $C^\infty$ strictly convex arcs, prescribed on compact subintervals of the normal-angle circle and satisfying the natural cyclic orientation compatibility, can be extended to a $C^\infty$ closed strictly convex curve. In support-function notation, this amounts to extending the local support functions to a $2\pi$-periodic $C^\infty$ function $s$ with
$$
s+s''>0.
$$
The resulting curve is
$$
\Gamma(\theta)=s(\theta)n(\theta)+s'(\theta)\tau(\theta),
$$
and has positive curvature. See the support-function formalism recalled above and \cite[Section 2.5]{Schneider2013}.

Applying this extension to the two graph arcs over $J_1$, and choosing the two complementary cap arcs inside the horizontal strip $|\eta|<h_*$, we obtain a $C^\infty$ closed curve with positive curvature. The selection of the caps within the strip is possible since the prescribed arcs over $J_1$ are located at a positive distance from the two lines $\eta = \pm h_*$. More precisely, we are given finitely many smooth, strictly convex arcs prescribed on pairwise disjoint compact intervals of normal angles, with a compatible cyclic ordering. 
By possibly shrinking the prescribed neighborhoods, the corresponding local support functions can then be extended to a smooth $2\pi$-periodic support function $s$ satisfying
$$
s + s'' > 0.
$$
Furthermore, since the prescribed arcs lie at a positive distance inside a horizontal strip, the complementary arcs can also be chosen to lie entirely within this strip.

Let $\mathcal O$ be the bounded component enclosed by this curve. Then $\mathcal O$ is a bounded $C^\infty$ uniformly convex domain, \eqref{eq:cap-strip} holds, and the boundary agrees with the graphs $\eta=\pm H(X)$ over a neighborhood $J_0'$ of $J_0$.

It remains only to prove the identity \eqref{eq:exact-central-channel}. Fix $X\in J_0$. Since $\overline{\mathcal O}$ is convex, the vertical section
$$
\mathcal O_X:=\{\eta\in\mathbb R:(X,\eta)\in\mathcal O\}
$$
is an open interval. The two points
$$
(X,H(X)),\qquad (X,-H(X))
$$
belong to $\partial\mathcal O$. Since $\overline{\mathcal O}$ is strictly convex, the open segment joining these two boundary points is contained in $\mathcal O$. Hence
$$
(-H(X),H(X))\subset \mathcal O_X.
$$

We claim that no point of $\mathcal O_X$ can lie above $H(X)$. Indeed, if some $\eta_1>H(X)$ belonged to $\mathcal O_X$, then, choosing any $\eta_0\in(-H(X),H(X))$, the point $(X,H(X))$ would be a strict convex combination of the two interior points $(X,\eta_0)$ and $(X,\eta_1)$. Since $\mathcal O$ is open and convex, this would imply $(X,H(X))\in\mathcal O$, contradicting $(X,H(X))\in\partial\mathcal O$. Therefore
$\mathcal O_X\subset (-\infty,H(X)).$
The same argument applied to the lower boundary point $(X,-H(X))$ gives $\mathcal O_X\subset (-H(X),\infty).$
Thus
$$
\mathcal O_X=(-H(X),H(X)).
$$
Since $X\in J_0$ was arbitrary, \eqref{eq:exact-central-channel} follows.
\end{proof}

\subsection{One-dimensional fold}\label{sec:fold-theory}

This subsection develops the one-dimensional results that will be used throughout the construction. All functions considered herein are real-valued, and primes indicate differentiation with respect to the relevant real variable. The spatial variable on the interval is denoted by $\xi$.

Let $f\in C^3([0,\infty))$ satisfy $f(s)>0$ for every $s\ge0$, and recall 
$$
F(s):=\int_0^s f(\tau)\,d\tau, \qquad s\ge 0.
$$
For $A>0$, let $U_A>0$ be the solution of
$$
-U_A''=f(U_A),\qquad U_A(0)=A,\qquad U_A'(0)=0,
$$
on the maximal interval on which $U_A$ is positive. This is well-defined since $$f(U_A)\ge \min_{[0,A]} f>0,$$
and then $U_A$ reaches zero in finite time.  

\begin{lemma} \label{lem:time-map}
For each $A>0$, the function $U_A$ is strictly decreasing on every interval $(0,\xi)$ on which it is positive.  Let $h(A)>0$ be the first positive zero of $U_A$. Then
\begin{equation}\label{eq:FU}
 \frac 1 2\bigl(U_A'(\xi)\bigr)^2=F(A)-F(U_A(\xi)),
\qquad 0\le \xi\le h(A),   
\end{equation}
and
$$
h(A)=\int_0^A\frac{ds}{\sqrt{2(F(A)-F(s))}}.
$$
\end{lemma}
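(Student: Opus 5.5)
The plan is to use the energy identity for the autonomous ODE and then separate variables.

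\emph{Step 1: sign of $U_A'$ and strict monotonicity.} Since $f>0$, we have $U_A''=-f(U_A)<0$ wherever $U_A$ is defined and positive. Hence $U_A'$ is strictly decreasing there. With $U_A'(0)=0$ this gives $U_A'(\xi)<0$ for $\xi>0$ in the positivity interval, so $U_A$ is strictly decreasing. Because $U_A''\le -\min_{[0,A]}f<0$ while $0\le U_A\le A$, comparison with a concave parabola shows $U_A$ hits zero in finite time. Therefore $h(A)$ is well defined, and $U_A$ stays in $[0,A]$ on $[0,h(A)]$, so the solution cannot blow up before that point.

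\emph{Step 2: the energy identity \eqref{eq:FU}.} Multiply the equation by $U_A'$:
$$
\frac{d}{d\xi}\Bigl(\tfrac12 (U_A')^2+F(U_A)\Bigr)=U_A'\bigl(U_A''+f(U_A)\bigr)=0 .
$$
The bracket is therefore constant on $[0,h(A)]$. Evaluating it at $\xi=0$, where $U_A=A$ and $U_A'=0$, gives the constant $F(A)$, which is \eqref{eq:FU}.

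\emph{Step 3: the time map.} On $(0,h(A))$, Step 1 gives $U_A'<0$. Identity \eqref{eq:FU} then gives
$$
U_A'=-\sqrt{2(F(A)-F(U_A))},
$$
and the radicand is positive because $F$ is strictly increasing ($F'=f>0$) and $U_A<A$. Since $U_A$ is a $C^1$ strictly decreasing bijection from $[0,h(A)]$ onto $[0,A]$, substitute $s=U_A(\xi)$. This gives $ds=U_A'\,d\xi$, so $d\xi=ds/\bigl(-\sqrt{2(F(A)-F(s))}\bigr)$. Integrating from $\xi=0$ ($s=A$) to $\xi=h(A)$ ($s=0$) yields the formula for $h(A)$.

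\emph{Main obstacle: the endpoint singularity at $s=A$.} The only delicate point is justifying the substitution there, where the integrand blows up. Since $F(A)-F(s)\ge (\min_{[0,A]} f)(A-s)$, the integrand is $O\bigl((A-s)^{-1/2}\bigr)$ and hence integrable. I will do the change of variables on $[\delta,h(A)]$ and let $\delta\to0^+$, using monotone convergence to pass to the limit.
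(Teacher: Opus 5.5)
Your proposal is correct and follows essentially the same route as the paper: strict concavity from $U_A''=-f(U_A)<0$ together with $U_A'(0)=0$, conservation of $\tfrac12(U_A')^2+F(U_A)$ evaluated at $\xi=0$, and the substitution $s=U_A(\xi)$ using the negative square root. The extra details you supply are details the paper leaves implicit: the finite-time zero and the absence of blow-up, the positivity of the radicand, and the integrable endpoint singularity at $s=A$ handled by truncating at $\xi=\delta$ and letting $\delta\to0^+$.
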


\begin{proof}
Since $f>0$, the differential equation gives
$$
U_A''(\xi)=-f(U_A(\xi))<0
$$
whenever $U_A(\xi)>0$.  As $U_A'(0)=0$, it follows that
$U_A'(\xi)<0$ for every $\xi>0$ for which $U_A$ remains positive.  Thus $U_A$ is strictly decreasing there.

Multiplying the equation $-U_A''=f(U_A)$ by $U_A'$, we obtain that
$$
-U_A''U_A'=f(U_A)U_A'.
$$
Namely, 
$$
-\frac{d}{d\xi}\left(\frac12(U_A')^2\right) = \frac{d}{d\xi}F(U_A).
$$
Hence, 
$$
\frac 1 2 (U_A')^2+F(U_A)
$$
is constant in $\xi$.  Evaluating at $\xi=0$, where $U_A(0)=A$ and $U_A'(0)=0$, yields
$$
\frac12(U_A'(\xi))^2+F(U_A(\xi))=F(A).
$$
This is exactly \eqref{eq:FU}. 

For $0<\xi<h(A)$, the derivative $U_A'(\xi)$ is negative.  Therefore
$$
U_A'(\xi)=-\sqrt{2(F(A)-F(U_A(\xi)))}.
$$
Using the change of variables $s=U_A(\xi)$, and noting that $s$ decreases from
$A$ to $0$ as $\xi$ increases from $0$ to $h(A)$, we conclude that
$$
h(A)=\int_0^{h(A)}d\xi
= \int_A^0\frac{ds}{-\sqrt{2(F(A)-F(s))}}
= \int_0^A\frac{ds}{\sqrt{2(F(A)-F(s))}}.
$$
\end{proof}
\begin{remark}
Observe that, since $U_A(h(A))=0$, it follows from \eqref{eq:FU} that
$$
U_A'(h(A))=-\sqrt{2F(A)}<0.
$$
Thus the first zero is transversal. By uniqueness of the initial-value problem, $U_A$ is even, and we regard it as defined on the symmetric interval
$$
I_A:=(-h(A),h(A)).
$$
Moreover, after extending $f$ arbitrarily as a $C^3$ function to a small interval $(-\delta,\infty)$, the standard smooth dependence of ODE solutions on initial data and the implicit function theorem applied to
$U_A(h(A))=0$
show that $h$ is $C^3$ on compact subintervals of $(0,\infty)$. The values of $h(A)$ and all identities up to the first zero are independent of the chosen extension.
\end{remark}

Recall that $f\in C^3([0,\infty))$.  For an amplitude $A$ for which
$h(A)$ is differentiable, set
$$
I_A:=(-h(A),h(A))
$$
and define the one-dimensional linearized Dirichlet operator
$$
\mathcal L_A:=-\frac{d^2}{d\xi^2}-f'(U_A(\xi))
\quad\text{on }I_A.
$$
Recall that the first Dirichlet eigenvalue of $\mathcal L_A$ is denoted by $\lambda_1(\mathcal L_A,I_A)$.

Let
$$
\omega_A(\xi):=\frac{\partial U_A(\xi)}{\partial A}
$$
where the derivative is taken for fixed $\xi$. Note that, the smooth dependence of
ordinary differential equations on initial data gives $\omega_A\in C^2$ on compact subintervals of $I_A$, and differentiating the equation for $U_A$
with respect to $A$ gives
\begin{equation}\label{eq:eigen-omega}
\mathcal L_ A \omega_A= -\omega_A''-f'(U_A)\omega_A=0 , \qquad \omega_A(0)=1, \qquad \omega_A'(0)=0.
\end{equation}
In particular, $\omega_A$ is not identically zero.

\begin{lemma} \label{lem:zero-eigenvalue}
Assume that $h$ is differentiable at $A$.  Then
\begin{equation}\label{eq:omega}
    \omega_A(h(A))=-U_A'(h(A))h'(A).
\end{equation}
Moreover, when  $h'(A)=0$, one has that $0$ is a Dirichlet eigenvalue of $\mathcal L_A$ on $I_A$.  

On the other hand, if $\lambda_1(\mathcal L_A,I_A)=0,$
then $h'(A)=0$.
\end{lemma}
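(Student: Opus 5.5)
The plan is to differentiate the identity $U_A(h(A))=0$ with respect to $A$, and then to use the structure of the ODE $\mathcal L_A\omega_A=0$ on the symmetric interval.

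\textbf{The formula \eqref{eq:omega}.} Extend $f$ to a $C^3$ function on $(-\delta,\infty)$ as in the preceding remark, so that $(A,\xi)\mapsto U_A(\xi)$ is $C^1$ in a neighborhood of $(A,h(A))$. The chain rule applied to $U_A(h(A))=0$ gives
\[
\omega_A(h(A))+U_A'(h(A))\,h'(A)=0,
\]
which is \eqref{eq:omega}.

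\textbf{If $h'(A)=0$, then $0$ is a Dirichlet eigenvalue.} Suppose $h'(A)=0$. Then \eqref{eq:omega} gives $\omega_A(h(A))=0$. Since $U_A$ is even in $\xi$ for every $A$, so is $\omega_A$; hence also $\omega_A(-h(A))=0$. By \eqref{eq:eigen-omega}, $\omega_A$ solves $\mathcal L_A\omega_A=0$, and $\omega_A(0)=1$ shows it is nontrivial. It is $C^2$ up to the endpoints, using the extended $f$. Therefore $\omega_A$ is a Dirichlet eigenfunction of $\mathcal L_A$ on $I_A$ with eigenvalue $0$.

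\textbf{If $\lambda_1(\mathcal L_A,I_A)=0$, then $h'(A)=0$.} Let $\phi>0$ be the first Dirichlet eigenfunction. It is even, since $\lambda_1$ is simple and the potential $f'(U_A)$ is even. Both $\phi$ and $\omega_A$ solve the same second-order linear ODE $-y''-f'(U_A)y=0$. Even solutions of this ODE form a one-dimensional space, determined by $y(0)$ with $y'(0)=0$. Hence $\omega_A=\phi/\phi(0)$, so $\omega_A(h(A))=0$.

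Since $U_A'(h(A))=-\sqrt{2F(A)}\neq0$ by the remark, \eqref{eq:omega} forces $h'(A)=0$. An alternative route avoids evenness of $\phi$: if $\omega_A(h(A))\neq0$, the Wronskian of $\phi$ and $\omega_A$ would be constant and nonzero, contradicting the vanishing of $\phi$ at both ends together with the symmetry of $\omega_A$. The evenness argument is cleaner.

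\textbf{Main obstacle.} The only delicate point is regularity. The differentiation requires $\omega_A$ to be defined up to $\xi=h(A)$, which is why $f$ is extended past $0$. One must also check that the extension does not affect $\omega_A$ on $\overline{I_A}$: this holds because $U_A\ge0$ there, so only values of $f'$ on $[0,A]$ enter. The remaining steps are routine.
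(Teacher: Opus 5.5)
Your proposal is correct and follows essentially the same route as the paper. You differentiate $U_A(h(A))=0$, use the evenness of $\omega_A$ to get the Dirichlet eigenfunction when $h'(A)=0$, and for the converse identify the even first eigenfunction with $\omega_A$ by ODE uniqueness, then use $U_A'(h(A))<0$. Your regularity remark about extending $f$ matches the paper's preceding remark.
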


\begin{proof}
The definition of $h$ yields $U_A(h(A))=0.$
Differentiating it with respect to $A$, we arrive at
$$
\omega_A(h(A))+U_A'(h(A))h'(A)=0.
$$
This proves \eqref{eq:omega}.

If $h'(A)=0$, then $\omega_A(h(A))=0$.  The function $U_A$ is even, hence
$\omega_A$ is even, and so $\omega_A(-h(A))=0$ as well.  Thus, according to \eqref{eq:eigen-omega}, $\omega_A$ is a
nontrivial Dirichlet solution of
$$
\mathcal L_A \omega_A= -\omega_A''-f'(U_A)\omega_A=0
$$
on $I_A$ with zero Dirichlet boundary value.  Therefore $0$ is a Dirichlet eigenvalue.

Now suppose that $\lambda_1(\mathcal L_A,I_A)=0$.  Observe that the first eigenfunction $\phi$ is even in $I_A$ as  the first eigenspace is simple and the potential $f'(U_A)$ is even.  Hence it satisfies a Neumann condition at the origin.  Then by normalizing $\phi$  via $\phi(0)=1$,  $\phi$  and $\omega_A$ solve the same second-order linear equation, and both have zero derivative at $0$.  Thus, uniqueness of the initial value problem of ODE \eqref{eq:eigen-omega} gives that  $\phi$ coincides with $\omega_A$.  Therefore $\omega_A(h(A))=0$.  Since $U_A'(h(A))<0$, \eqref{eq:omega} yields $h'(A)=0$.
\end{proof}

\begin{lemma} \label{lem:lower-branch-stability}
Let $(0,A_*)$ be an interval of amplitudes on which $h$ is $C^1$, $h$ extends $C^1$ to $A_*$ and
$$
h'(A)>0\qquad\text{for } \ 0<A<A_*.
$$
Assume also that $h(A)\to0$ as $A\to 0^+$, and that $f'$ is bounded on
$[0,A_*]$.  Then
$$
\lambda_1(\mathcal L_A,I_A)>0
\qquad\text{for every }0<A<A_*.
$$
If, in addition, $h'(A_*)=0$ and $h'$ has no other zero in $(0,A_*]$, then
$$
\lambda_1(\mathcal L_{A_*},I_{A_*})=0.
$$
\end{lemma}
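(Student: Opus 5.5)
A continuity argument in the amplitude $A$ seems most natural, with Lemma~\ref{lem:zero-eigenvalue} supplying the key obstruction. First, for small $A$ the eigenvalue is positive. The interval $I_A$ has half-width $h(A)\to0$, so by scaling $\lambda_1(-d^2/d\xi^2,I_A)=\pi^2/(4h(A)^2)\to\infty$. Since $0\le f'(U_A)\le M:=\sup_{[0,A_*]}|f'|$, we get $\lambda_1(\mathcal L_A,I_A)\ge \pi^2/(4h(A)^2)-M>0$ for all sufficiently small $A$.

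Second, $A\mapsto\lambda_1(\mathcal L_A,I_A)$ is continuous on $(0,A_*]$. To see this, rescale $\xi=h(A)y$ onto the fixed interval $(-1,1)$. Then $h(A)^2\lambda_1(\mathcal L_A,I_A)$ is the first Dirichlet eigenvalue of $-d^2/dy^2-h(A)^2f'(U_A(h(A)y))$. The potential depends continuously on $A$ in sup norm, since $h$ is continuous and $U_A$ depends continuously on initial data (after the $C^3$ extension of $f$ in the Remark). By the Rayleigh quotient, the first eigenvalue is $1$-Lipschitz in the sup norm of the potential. Dividing by $h(A)^2>0$ preserves continuity.

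Third, suppose for contradiction that $\lambda_1(\mathcal L_{A_1},I_{A_1})\le0$ for some $A_1\in(0,A_*)$. By continuity and positivity near $0$, there is a first $A_0\in(0,A_1]$ with $\lambda_1(\mathcal L_{A_0},I_{A_0})=0$. Lemma~\ref{lem:zero-eigenvalue} then gives $h'(A_0)=0$, contradicting $h'>0$ on $(0,A_*)$. Hence $\lambda_1>0$ on $(0,A_*)$.

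For the endpoint, assume $h'(A_*)=0$ with no other zero in $(0,A_*]$. By continuity and the first part, $\lambda_1(\mathcal L_{A_*},I_{A_*})\ge0$. By Lemma~\ref{lem:zero-eigenvalue}, $0$ is a Dirichlet eigenvalue of $\mathcal L_{A_*}$ with eigenfunction $\omega_{A_*}$, so $\lambda_1\le0$ and therefore $\lambda_1=0$.

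The main obstacle is the continuity in the second step, in particular making sure it holds up to $A_*$ so that the endpoint argument goes through. The domain rescaling handles the moving interval, and the only remaining point is continuity of $A\mapsto U_A(h(A)\,\cdot)$ on $[-1,1]$, which follows from the smooth dependence of ODE solutions on initial data.
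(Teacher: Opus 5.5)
Your proof is correct and follows the paper's argument step by step. The steps are: Poincar\'e's inequality for small $A$; continuity of $A\mapsto\lambda_1(\mathcal L_A,I_A)$ by pulling back to a fixed interval and using the Rayleigh quotient; Lemma~\ref{lem:zero-eigenvalue} to rule out a first zero of $\lambda_1$ in $(0,A_*)$; and, at $A_*$, the same lemma combined with the limit bound $\lambda_1\ge 0$. The paper only sketches the continuity step, and your rescaling to $(-1,1)$ with the $1$-Lipschitz dependence of $\lambda_1$ on the potential makes it explicit.
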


\begin{proof}
By our assumption on $h$, for small $A>0$, the interval length $2h(A)$ is small.  Let 
$M_*:=\sup_{0\le s\le A_*}f'(s)$.  
By the Poincar\'e inequality on
$(-h(A),h(A))$, for every $\varphi\in W^{1,2}_0(I_A)$, we have
$$
\int_{I_A}|\varphi'|^2\,d\xi
\ge \frac{\pi^2}{4h(A)^2}\int_{I_A}\varphi^2\,d\xi.
$$
Hence, 
$$
\int_{I_A}\left(|\varphi'|^2-f'(U_A)\varphi^2\right)d\xi
\ge \left(\frac{\pi^2}{4h(A)^2}-M_*\right) \int_{I_A}\varphi^2\,d\xi.
$$
Since $h(A)\to 0$, the right-hand coefficient is positive for all sufficiently small $A$.  Thus $\lambda_1(\mathcal L_A,I_A)>0$ for small $A$.

Notice that, the first eigenvalue depends continuously on $A$ as long as the branch is smooth. This follows, for example, by pulling the operators back to a fixed interval and using the Rayleigh quotient characterization of the first eigenvalue.  Now if $\lambda_1$ became zero at some
$A_0\in(0,A_*)$, Lemma~\ref{lem:zero-eigenvalue} would give $h'(A_0)=0$, 
contrary to the assumption $h'>0$ on $(0,A_*)$.  Therefore
$\lambda_1>0$ throughout $(0,A_*)$, and hence
$$\lambda_1(\mathcal L_{A_*}, I_{A_*})= \lim_{A\to (A_*)^-}\lambda_1(\mathcal L_{A},I_{A})\ge 0.$$

Now if $h'(A_*)=0$, Lemma~\ref{lem:zero-eigenvalue} shows that $0$ is a Dirichlet eigenvalue of $\mathcal L_{A_*}$, which implies
$$\lambda_1(\mathcal L_{A_*}, I_{A_*})\le 0.$$
Thus, we conclude
$\lambda_1(\mathcal L_{A_*},I_{A_*})=0$.
\end{proof}

We now derive the basic singularity at a nondegenerate fold, which is the key toward our counterexample construction.  Let $f$ be fold-admissible in the sense stated in the introduction.  Let
$ h_c:=h(A_c), $ and choose  $0<t<A_c.$
For every $A$ sufficiently close to $A_c$, with $A>t$, define
$Y_t(A)\in(0,h(A))$ by
$$
U_A(Y_t(A))=t.
$$
This point is well-defined and unique since $U_A$ is strictly decreasing on
$(0,h(A))$.

\begin{proposition} \label{prop:sqrt-response}
Let $f$ be fold-admissible.  Then there exists $\delta_h>0$ so that, after restricting to the stable branch
$A\in(A_c-\delta_h, A_c)$ sufficiently close to $A_c$, the   map $A\mapsto h(A)$ is
invertible.  Let $A_-(h)$ denote its inverse, and define
$$
\mathcal Y_t(h):=Y_t(A_-(h)).
$$
Then
$$
\mathcal Y_t(h)=Y_c-c_t\sqrt{h_c-h}+o\bigl(\sqrt{h_c-h}\bigr)
\quad\text{as }h\to (h_c)^-,
$$
where $Y_c:=Y_t(A_c)$ and $c_t>0.$
\end{proposition}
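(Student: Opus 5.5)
The plan is to Taylor expand $h$ at the fold, invert on the lower branch to get a square-root law for $A_-(h)$, and then compose with the smooth function $Y_t$. The only genuinely nontrivial point is the sign of $Y_t'(A_c)$.

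\emph{Step 1: inverting the lower branch.} Since $h\in C^3$ near $A_c$ with $h'(A_c)=0$ and $h''(A_c)<0$, Taylor's theorem gives
$$
h_c-h(A)=\tfrac12|h''(A_c)|(A_c-A)^2\bigl(1+O(A_c-A)\bigr).
$$
Moreover $h'>0$ on $(A_c-\delta_h,A_c)$ for small $\delta_h$, since $h'' <0$ there. So $h$ is a strictly increasing $C^3$ bijection from this interval onto $(h(A_c-\delta_h),h_c)$, and the inverse $A_-$ exists. Taking square roots of the positive quantity $h_c-h$ and inverting, I expect
$$
A_c-A_-(h)=\sqrt{\tfrac{2}{|h''(A_c)|}}\,\sqrt{h_c-h}\,\bigl(1+o(1)\bigr).
$$
Concretely, $g(A):=\sqrt{h_c-h(A)}$ extends as a $C^1$ function of $A$ near $A_c$ with $g'(A_c)=-\sqrt{|h''(A_c)|/2}\neq0$, so it can be inverted by the inverse function theorem.

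\emph{Step 2: smoothness of $Y_t$.} Since $t<A_c$, the level $t$ is attained at $Y_t(A)\in(0,h(A))$, where $U_A'(Y_t(A))<0$ by Lemma~\ref{lem:time-map}. The implicit function theorem applied to $U_A(Y)=t$ gives $Y_t\in C^1$ near $A_c$, with
$$
Y_t'(A)=-\frac{\omega_A(Y_t(A))}{U_A'(Y_t(A))}.
$$
Composing the two steps,
$$
\mathcal Y_t(h)=Y_c+Y_t'(A_c)\bigl(A_-(h)-A_c\bigr)+o(A_c-A_-(h)),
$$
so $c_t=Y_t'(A_c)\sqrt{2/|h''(A_c)|}$. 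It therefore remains to show $Y_t'(A_c)>0$, i.e. $\omega_{A_c}(Y_c)>0$.

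\emph{Step 3 (main obstacle): the sign of $\omega_{A_c}$.} At the fold, Lemma~\ref{lem:zero-eigenvalue} and fold-admissibility give $\lambda_1(\mathcal L_{A_c},I_{A_c})=0$. The proof of that lemma shows that the first eigenfunction, normalized by $\phi(0)=1$, coincides with $\omega_{A_c}$. A first Dirichlet eigenfunction does not vanish in the open interval, so $\omega_{A_c}>0$ on $I_{A_c}$. Since $Y_c\in(0,h_c)$, we get $\omega_{A_c}(Y_c)>0$. Together with $U_{A_c}'(Y_c)<0$, this gives $Y_t'(A_c)>0$, hence $c_t>0$. The sign is consistent with the geometry: decreasing $A$ lowers the level point, so $\mathcal Y_t$ drops below $Y_c$ like $-c_t\sqrt{h_c-h}$.

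The remaining care is regularity bookkeeping. One needs $\omega_A$ continuous in $(A,\xi)$ up to $A_c$ on compact subsets of the interior; this follows from smooth dependence of ODE solutions on initial data, after extending $f$ as in the Remark. One also needs the $o(\cdot)$ term in Step 1 to be uniform, which holds because $h\in C^3$.
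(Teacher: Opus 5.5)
Your proposal is correct and follows essentially the same route as the paper: a Taylor expansion of $h$ at the nondegenerate fold, the implicit-function formula $Y_t'(A_c)=\omega_{A_c}(Y_c)/(-U_{A_c}'(Y_c))$, and positivity of $\omega_{A_c}$ from identifying it with the first Dirichlet eigenfunction when $\lambda_1(\mathcal L_{A_c},I_{A_c})=0$, giving the same constant $c_t=Y_t'(A_c)\sqrt{2/|h''(A_c)|}$. The differences are cosmetic: you invert $\sqrt{h_c-h}$ explicitly through a $C^1$ extension and the inverse function theorem, where the paper substitutes the expansions directly; and you get $\omega_{A_c}>0$ from the ODE-uniqueness argument in the converse part of Lemma~\ref{lem:zero-eigenvalue}, where the paper instead notes that a Dirichlet eigenfunction for the eigenvalue $\lambda_1=0$ must be the first one.
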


\begin{proof}
As $f$ is fold-admissible, then $h'(A_c)=0$ and the inequality $h''(A_c)<0$. They imply that, for
$A<A_c$ sufficiently close to $A_c$, one has $
h'(A)>0.$
Thus $A\mapsto h(A)$ is invertible on the lower branch.

Recall that 
$$
\omega_{A_c}(\xi):=\left.\frac{\partial U_A(\xi)}{\partial A}\right|_{A=A_c}.
$$
Since $h'(A_c)=0$, Lemma~\ref{lem:zero-eigenvalue} shows that $\omega_{A_c}$ is a
Dirichlet eigenfunction corresponding to the eigenvalue $0$.  As $f$ is
fold-admissible, it must be the first eigenfunction. Hence
$$
\omega_{A_c}(\xi)>0
\qquad\text{for }\ -h_c<\xi<h_c.
$$

Due to $U'_A(Y_t(A))<0$, the implicit function theorem yields that $ A\mapsto Y_t(A)$ is $C^1$ near $A_c$.
Now differentiating the identity
$U_A(Y_t(A))=t$
with respect to $A$ gives
$$
\frac{\partial U_A}{\partial A}(Y_t(A))
+ U_A'(Y_t(A))Y_t'(A)=0.
$$
At $A=A_c$, this yields
$$
Y_t'(A_c)=
\frac{\omega_{A_c}(Y_c)}{-U_{A_c}'(Y_c)}.
$$
Notice that, since $\omega_{A_c}(Y_c)>0$ and
$U_{A_c}'(Y_c)<0$, both numerator and denominator in the formula above are positive. Therefore we conclude
\begin{equation}\label{eq:alpha-t}
    \alpha_t:=Y_t'(A_c)>0.
\end{equation}

Since $h'(A_c)=0$ and $h''(A_c)<0$, the Taylor expansion yields, by setting $\delta:=A_c-A$,
$$
h(A)=h_c-\beta\delta^2+o(\delta^2),\qquad \beta:=-\frac12h''(A_c)>0.
$$
Then it follows that 
\begin{equation}\label{eq:expand-h}
    \sqrt{h_c-h(A)}=\sqrt\beta\,\delta+o(\delta).
\end{equation}
Additionally, according to \eqref{eq:alpha-t}
$$
Y_t(A)=Y_c-\alpha_t\delta+o(\delta).
$$
Substituting \eqref{eq:expand-h} into the expansion of $Y_t(A)$ above gives
$$
\mathcal Y_t(h)=Y_c-c_t\sqrt{h_c-h}+o\bigl(\sqrt{h_c-h}\bigr),
\qquad
c_t:=\alpha_t\beta^{-1/2}>0.
$$
This concludes the proposition. 
\end{proof}

Now when a half-width $h$ belong  to a stable lower-branch half-width interval on which $A\mapsto h(A)$ is invertible, we define
$$
   \mathcal U_h(\xi):=U_{A_-(h)}(\xi),\qquad -h\le \xi\le h.
$$
Thus $\mathcal U_h$ is indexed by the half-width $h$, whereas $U_A$ remains indexed by the amplitude $A$.

In the end of this subsection, we verify the model nonlinearities. 

\begin{proposition}\label{prop:exp-fold}
The nonlinearity $f(s)=e^s$ is fold-admissible.
\end{proposition}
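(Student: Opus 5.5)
For $f(s)=e^s$ the one-dimensional problem can be solved explicitly, and I will read off $h$ from the formula. Since $F(A)-F(s)=e^A-e^s$, the ODE $-U''=e^U$, $U(0)=A$, $U'(0)=0$ has the solution
$$
U_A(\xi)=A-2\log\cosh\Bigl(\sqrt{e^A/2}\,\xi\Bigr).
$$
This is checked by direct differentiation: $U''=-e^A\operatorname{sech}^2(\sqrt{e^A/2}\,\xi)=-e^{U}$. Setting $U_A(h)=0$ gives
$$
h(A)=\sqrt{2}\,e^{-A/2}\operatorname{arccosh}\bigl(e^{A/2}\bigr).
$$
Put $y=e^{A/2}\in(1,\infty)$, so that $h=\sqrt2\,g(y)$ with $g(y)=\operatorname{arccosh}(y)/y$. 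Then $h$ is real-analytic in $A>0$, in particular $C^3$. Moreover $h(A)\to0$ as $A\to0^+$ because $\operatorname{arccosh}(1)=0$, and $h(A)\to 0$ as $A\to\infty$.

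Next comes the fold. A computation gives
$$
g'(y)=\frac{1}{y^2}\Bigl(\frac{y}{\sqrt{y^2-1}}-\operatorname{arccosh} y\Bigr).
$$
Set $k(y):=\frac{y}{\sqrt{y^2-1}}-\operatorname{arccosh}y$. Then $k(y)\to+\infty$ as $y\to1^+$, $k(y)\to-\infty$ as $y\to\infty$, and
$$
k'(y)=-\frac{1}{(y^2-1)^{3/2}}-\frac{1}{\sqrt{y^2-1}}<0.
$$
So $k$ has a unique zero $y_c$; equivalently $y_c\tanh(\operatorname{arccosh}y_c)\cdot$, i.e.\ $z\tanh z=1$ with $z=\operatorname{arccosh}y_c$. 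Hence $h'(A)>0$ for $A<A_c:=2\log y_c$, $h'(A)<0$ for $A>A_c$, and $h'(A_c)=0$. For the second derivative, $h'(A)=\sqrt2\,g'(y)\,y/2$, and at $y_c$ we have $g'=0$, so
$$
h''(A_c)=\sqrt2\,g''(y_c)\,y_c^2/4=\sqrt2\,k'(y_c)/4<0.
$$
Here I used that $g''(y_c)=k'(y_c)/y_c^2$ when $k(y_c)=0$. This gives the nondegenerate fold.

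The stability requirements then follow from Lemma~\ref{lem:lower-branch-stability} applied with $A_*=A_c$. Its hypotheses hold: $h$ is $C^1$ up to $A_c$, $h'>0$ on $(0,A_c)$, $h\to0$ as $A\to0^+$, $f'=e^s$ is bounded on $[0,A_c]$, and $h'$ vanishes in $(0,A_c]$ only at $A_c$. The lemma therefore yields $\lambda_1(\mathcal L_A,I_A)>0$ for all $A\in(0,A_c)$, so any $\delta_0\le A_c$ works, and also $\lambda_1(\mathcal L_{A_c},I_{A_c})=0$. Since $f=e^s$ is positive, increasing and convex, $f$ is fold-admissible.

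The only step needing care is the sign analysis of $g'$ and $g''$. It reduces to the monotonicity of the single function $k$, which the explicit derivative settles, so I do not expect a real obstacle.
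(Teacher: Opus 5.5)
Your proof is correct and follows essentially the paper's route. You use the explicit profile $A-2\log\cosh$, read off $h$, locate the unique nondegenerate critical point, and then invoke Lemma~\ref{lem:lower-branch-stability}. Your $z=\operatorname{arccosh}y_c$ is exactly the paper's $\mu_c$ with $\mu_c\tanh\mu_c=1$, so parametrizing by $y=e^{A/2}$ instead of $\mu=\alpha h=\operatorname{arccosh}y$ is only a cosmetic difference. The phrase ``equivalently $y_c\tanh(\operatorname{arccosh}y_c)\cdot$'' is garbled and should be cleaned up, but the intended condition $z\tanh z=1$ is right.
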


\begin{proof}
Recall that
$$\sinh(t)=\frac{e^t-e^{-t}}{2},\quad \cosh(t)=\frac{e^t+e^{-t}}{2}. $$
For $f(s)=e^s$, the one-dimensional solutions can be computed explicitly.  Let $\alpha>0$
and define
$$
U(\xi)=A-2\log\cosh(\alpha \xi),
\qquad
2\alpha^2=e^A.
$$
Then
$$
U''(\xi)=-2\alpha^2\operatorname{sech}^2(\alpha \xi)
$$
and
$$
e^{U(\xi)}=e^A\operatorname{sech}^2(\alpha \xi)
=2\alpha^2\operatorname{sech}^2(\alpha \xi).
$$
Thus $-U''=e^U$, and the boundary condition $U(h)=0$ is equivalent to $e^{A/2}=\cosh(\alpha h).$

Set $\mu:=\alpha h.$
Then
$$
A(\mu)=2\log\cosh\mu,
$$
and since $\alpha=e^{A/2}/\sqrt2=\cosh\mu/\sqrt2$,
\begin{equation}\label{eq:exp-tildeh}
    \tilde h(\mu):=h(A(\mu))=\frac{\mu}{\alpha}=\sqrt2\,\mu\,\operatorname{sech}\mu.
\end{equation}

The map $\mu\mapsto A(\mu)$ is strictly increasing for $\mu>0$, since
$$
A'(\mu)=2\tanh\mu>0.
$$
Therefore critical points of $h$ as a function of $A$ are exactly critical
points of $h$ as a function of $\mu$.

Differentiating \eqref{eq:exp-tildeh},
$$
\tilde h'(\mu)=\sqrt2\,\operatorname{sech}\mu\,(1-\mu\tanh\mu).
$$
Note that the function $\mu\mapsto \mu\tanh\mu$ is strictly increasing from $0$ to $+\infty$.  Hence, there is a unique $\mu_c>0$ satisfying
$$
\mu_c\tanh\mu_c=1.
$$
At this point,
$$
\frac{d^2\tilde h}{d\mu^2}(\mu_c)
=\sqrt2\,\operatorname{sech}\mu_c
\bigl(-\tanh\mu_c-\mu_c\operatorname{sech}^2\mu_c\bigr)<0.
$$
Since $A'(\mu_c)>0$, it follows from the chain rule that
$$
h'(A_c)=0,
\qquad
h''(A_c)= \frac{\tilde h''(\mu_c)}{(A'(\mu_c)^2)}<0,
$$
where $A_c=A(\mu_c)$.  The  formulas above also show that $h$ is smooth
near $A_c$.  Moreover, $h'(A)>0$ on the lower branch
$0<A<A_c$, and $h(A)\to0$ as $A\to 0^+$.

For small $A$, the interval length $2h(A)$ tends to zero, while
$f'(U_A)=e^{U_A}$ remains bounded.  Hence, the first eigenvalue of the
linearized operator is positive for small $A$ by Poincar\'e inequality.  By
Lemma~\ref{lem:lower-branch-stability}, the first eigenvalue remains positive
throughout the lower branch and becomes zero at the unique fold.  Thus
$e^s$ is fold-admissible.
\end{proof}

\begin{proposition} \label{prop:power-fold}
Let $a>0$ and $p>1$.  The nonlinearity
$f(s)=(a+s)^p$ is fold-admissible.
\end{proposition}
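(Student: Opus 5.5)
The plan is to reduce the time map to a one-variable function through the scaling of the Lane--Emden equation, and then read off the fold directly.

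\textbf{Step 1: scaling.} Set $w:=a+U_A$. Then $w$ solves $-w''=w^p$ with $w(0)=B:=a+A$ and $w'(0)=0$, and $h(A)$ is the first time at which $w$ reaches the value $a$. Let $W$ denote the solution with $W(0)=1$ and $W'(0)=0$. By scaling, $w(\xi)=B\,W\bigl(B^{(p-1)/2}\xi\bigr)$.

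Write $T(r)$ for the first time at which $W$ reaches the level $r\in(0,1)$. Lemma~\ref{lem:time-map}, applied to $W$, gives
$$T(r)=\int_r^1\varphi(v)\,dv,\qquad \varphi(v):=\Bigl(\tfrac{2}{p+1}(1-v^{p+1})\Bigr)^{-1/2}.$$
Introduce the new parameter $r:=a/B\in(0,1)$. This gives the identity
$$h(A)=a^{-(p-1)/2}\,g(r),\qquad g(r):=r^{(p-1)/2}T(r),\qquad A=a/r-a.$$
The map $r\mapsto A$ is a smooth decreasing diffeomorphism from $(0,1)$ onto $(0,\infty)$. Hence critical points of $h$ correspond exactly to critical points of $g$.

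\textbf{Step 2: the critical point of $g$.} Differentiating, and using $T'=-\varphi$,
$$g'(r)=r^{(p-3)/2}k(r),\qquad k(r):=\tfrac{p-1}{2}\,T(r)-r\varphi(r).$$
The function $k$ is strictly decreasing, because $T$ decreases while $r\varphi(r)$ is strictly increasing ($\varphi$ is increasing and positive). At the endpoints we have
$$k(0^+)=\tfrac{p-1}{2}T(0)>0,$$
which is finite since $\varphi$ is integrable near $0$. We also have $k(1^-)=-\infty$, since $\varphi(r)\to\infty$ while $T(r)\to0$ as $r\to1^-$.

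So $k$ has a unique zero $r_c$, and there $g''(r_c)=r_c^{(p-3)/2}k'(r_c)<0$. Here $k'(r_c)<0$ strictly, because $k'=-\tfrac{p-1}{2}\varphi-(r\varphi)'<0$. Set $A_c:=a/r_c-a$.

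Since $h'(A_c)=0$, the chain rule gives
$$h''(A_c)=a^{-(p-1)/2}\,g''(r_c)\,\bigl(dr/dA\bigr)^2<0.$$
Moreover, $h$ is $C^3$ (indeed smooth) near $A_c$: $T$ is smooth on $(0,1)$, and the map $r\leftrightarrow A$ is smooth.

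The lower range $0<A<A_c$ corresponds to $r\in(r_c,1)$. On that range $k<0$, so $g'<0$, and therefore $h'(A)>0$. As $A\to0^+$ we have $r\to1^-$, so $T(r)\to0$ and hence $h(A)\to0$.

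\textbf{Step 3: stability.} The derivative $f'(s)=p(a+s)^{p-1}$ is bounded on $[0,A_c]$. Also $h'>0$ on $(0,A_c)$, $h'(A_c)=0$, and $h'$ has no other zero in $(0,A_c]$. So Lemma~\ref{lem:lower-branch-stability} yields
$$\lambda_1(\mathcal L_A,I_A)>0\ \text{ for } A<A_c,\qquad \lambda_1(\mathcal L_{A_c},I_{A_c})=0.$$
The regularity requirements on $f$ are immediate: $f\in C^3([0,\infty))$ with $f>0$, $f'\ge0$ and $f''\ge0$. This establishes fold-admissibility.

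\textbf{Main obstacle.} The delicate point is showing that the critical point of $g$ is unique and nondegenerate. The monotonicity of $k$ is what I expect to carry it. The only places needing care are the finiteness of $T(0)$ and the blow-up of $r\varphi(r)$ at $r=1$; both follow from the integrability of $(1-v^{p+1})^{-1/2}$.
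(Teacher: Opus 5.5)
Your proof is correct and follows essentially the paper's route. You use the same reduction $h(A)=a^{-(p-1)/2}\,r^{(p-1)/2}\,T(r)$ in the variable $r=a/(a+A)$ (the paper's $x$), obtained by Lane--Emden scaling rather than by substitution in the time-map integral; your $k=\tfrac{p-1}{2}T-r\varphi$ equals $\sqrt{(p+1)/2}\,J(\alpha-R)$ in the paper's notation, and its monotonicity is more immediate than the paper's log-derivative computation for $R$; and you make the same appeal to Lemma~\ref{lem:lower-branch-stability}. One small wording slip: the finiteness of $T(0)$ and the limit $T(r)\to0$ as $r\to1^-$ both come from integrability of $\varphi$ near $v=1$, where $\varphi\sim c(1-v)^{-1/2}$, not near $0$, where $\varphi$ is bounded.
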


\begin{proof}
Let
$\alpha:=\frac{p-1}{2}.$
The primitive of $f$ is
$$
F(s)=\frac{(a+s)^{p+1}-a^{p+1}}{p+1}.
$$
By Lemma~\ref{lem:time-map},
$$
h(A)=\sqrt{\frac{p+1}{2}}
\int_0^A
\frac{ds}{\sqrt{(a+A)^{p+1}-(a+s)^{p+1}}}.
$$
Set
$$
x:=\frac{a}{a+A}\in(0,1).
$$
Using the change of variables $r=\frac{a+s}{a+A}$, we get
$$
h(A)=C_{p,a}\,x^\alpha J(x),
$$
where 
$$C_{p,a}=\sqrt{\frac {p+1}2} a^{\frac {1-p}2}>0$$
depending only on $p$ and $a$, and
$$
J(x):=\int_x^1\frac{dr}{\sqrt{1-r^{p+1}}}.
$$
Let
$$
\Phi(x):=\frac{1}{\sqrt{1-x^{p+1}}}.
$$
Then $J'(x)=-\Phi(x)$.  Differentiating $x^\alpha J(x)$, we find that the critical point satisfies the following equation: 
$$
\alpha J(x)=x\Phi(x).
$$
Set
$$
R(x):=\frac{x\Phi(x)}{J(x)},
\qquad 0<x<1.
$$
Applying the logarithm to both sides of the equation and subsequently differentiating yields
$$
\frac{R'(x)}{R(x)}
=\frac{1}{x}+\frac{\Phi'(x)}{\Phi(x)}+\frac{\Phi(x)}{J(x)}>0.
$$
Additionally, by the definition of $\Phi$ and $J$,
$$
R(x)\to0\quad\text{as }x \to 0^+,
\qquad
R(x)\ge \frac{c_0}{x-1}\to+\infty\quad\text{as }x\to 1^- \quad \text{for some $c_0>0$}.
$$
Therefore the equation $R(x)=\alpha$ has a unique solution $x_c\in(0,1)$.
The critical point is nondegenerate.  Indeed, observe that $g(x):=x^\alpha J(x)$ satisfies
$$g'(x)= x^{\alpha-1} J(x)(\alpha-R(x))$$
and at $x=x_c$, $R(x_c)=\alpha$, therefore $g'(x_c)=0$ and
$$g''(x_c)= -x_c^{\alpha-1} J(x_c)R'(x_c)<0,$$
i.e. $g$ has a strict nondegenerate maximum at $x_c$.  Since $x=a/(a+A)$ is a smooth strictly decreasing
function of $A$, $h(A)$  has a strict nondegenerate maximum at the corresponding amplitude
$$
A_c=a\left(\frac1{x_c}-1\right).
$$
Thus
$$
h'(A_c)=0,
\qquad
h''(A_c)<0.
$$
The representation above also shows that $h$ is smooth near $A_c$.  The sign
on the lower branch is as follows.  The critical point of $x\mapsto x^\alpha
J(x)$ occurs at $x_c$, and this function is increasing for $0<x<x_c$ and
decreasing for $x_c<x<1$.  Since $x(A)=a/(a+A)$ is strictly decreasing with respect to $A$, the
half-width $h(A)$ is increasing for $0<A<A_c$.  Thus $h'(A)>0$ on the lower
branch.  Finally, $h(A)>0$ and $h(A)\to0$ as $A\to 0^+$.

For small $A$, the interval length $2h(A)$ tends to zero, while
$f'(U_A)=p(a+U_A)^{p-1}$ remains bounded.  Therefore the first eigenvalue
of $\mathcal L_A$ is positive for small $A$.  Since $h'$ has no zero on
$(0,A_c)$, Lemma~\ref{lem:lower-branch-stability} implies that the first
eigenvalue is positive throughout the lower branch and becomes zero at
$A_c$.  Hence $(a+s)^p$ is fold-admissible.
\end{proof}

\subsection{The one-dimensional profile}\label{sec:slow-width}

In this subsection we construct, at the level of the formal slowly varying one-dimensional profile, a level height that violates midpoint concavity.  This is the geometric core of the counterexample.  

Let $f$ be fold-admissible.  We use the notation from \Cref{sec:fold-theory}.  In particular,
$$
h_c:=h(A_c).
$$
Choose a level
$$
0<t<A_c.
$$
Up to decreasing the lower-branch half-width neighborhood of $h_c$ if necessary, there exists $\rho_h>0$ so that the lower-branch inverse $A_-(h)$ is defined for $h\in(h_c -\rho_h, h_c)$.  Then by   $0\le \delta_{\rm br} < \rho_h$ such that, for every
$$
h\in(h_c-\delta_{\rm br},h_c),
$$
the square-root expansion of
Proposition~\ref{prop:sqrt-response} applies, and
\begin{equation}\label{eq:level-below-amplitude}
A_-(h)>t.
\end{equation}
For such $h$, let $\mathcal Y_t(h)$ be the upper height of the level $t$ in the
stable one-dimensional profile of half-width $h$, namely
$$
\mathcal Y_t(h)=Y_t(A_-(h)).
$$
By Proposition~\ref{prop:sqrt-response},
$$
\mathcal Y_t(h)=Y_c-c_t\sqrt{h_c-h}+o\bigl(\sqrt{h_c-h}\bigr)
\quad\text{as }h\to (h_c)^-,
$$
with $c_t>0$.

\begin{lemma}\label{lem:sloping-gap}
There exist a number $\ell_0>0$ and   $D_0\in C^\infty(-2\ell_0,2\ell_0)$ so that, $D_0\ge \frac 1 2 $, 
$$
D_0''(X)\ge \frac 1 2>0\qquad \text{for every } X\in(-2\ell_0,2\ell_0),
$$
and for any $0<\ell<\ell_0$ with
$$
X_-:=-\ell, \qquad X_0:=0, \qquad X_+:=\ell, \ 
$$
one has
$$
\sqrt{D_0(X_0)}> \frac{\sqrt{D_0(X_-)}+\sqrt{D_0(X_+)}}{2}.
$$
\end{lemma}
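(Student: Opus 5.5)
Since $D_0$ is only required to be convex with $D_0''\ge \tfrac12$ and $D_0\ge\tfrac12$, the plan is to write it down explicitly: a convex function whose square root is strictly concave near the origin. Take
$$
D_0(X):=1+\tfrac12 X^2 .
$$
Then $D_0\ge 1\ge\tfrac12$ and $D_0''\equiv 1\ge\tfrac12$ on all of $\mathbb R$, so the first two requirements hold for every $\ell_0$. The only real content is the strict midpoint inequality for $\sqrt{D_0}$ at the three points $-\ell,0,\ell$.

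To handle it, set $g:=\sqrt{D_0}=(1+X^2/2)^{1/2}$ and compute
$$
g''(X)=\frac{D_0''}{2\sqrt{D_0}}-\frac{(D_0')^2}{4D_0^{3/2}}
=\frac{2D_0 D_0''-(D_0')^2}{4D_0^{3/2}}
=\frac{2+X^2-X^2}{4D_0^{3/2}}>0 .
$$
This is convex, so the quadratic choice does not work. We need $2D_0D_0''<(D_0')^2$ at the midpoint, which forces $D_0'(0)\neq0$. The lemma's name (``sloping gap'') points to the same fix: use a sloped $D_0$ and exploit that $\sqrt{\cdot}$ is concave.

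So take instead
$$
D_0(X):=\tfrac12+\tfrac14 X^2+kX, \qquad k>0 \text{ to be chosen},
$$
or the same expression shifted by a constant to keep $D_0\ge\tfrac12$ on $(-2\ell_0,2\ell_0)$. Then $D_0''=\tfrac12$ and
$$
2D_0D_0''-(D_0')^2\big|_{X=0}=D_0(0)-k^2 .
$$
This is negative once $k^2>D_0(0)$. To secure $D_0\ge\tfrac12$, I would use $D_0(X)=c+kX+\tfrac14X^2$ with $c$ slightly larger than $\tfrac12$ and $k$ large, say $c=1$, $k=2$, and restrict to $|X|<2\ell_0$ with $\ell_0$ small enough that $D_0\ge 1-4\cdot 2\ell_0-\dots\ge\tfrac12$; taking $\ell_0\le 1/20$ suffices. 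At $X=0$ we get $2D_0D_0''-(D_0')^2=1-4<0$, hence $g''(0)<0$. By continuity, after shrinking $\ell_0$, $g''<0$ on $[-\ell_0,\ell_0]$, so $g$ is strictly concave there.

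For any $0<\ell<\ell_0$, strict concavity of $g$ on $[-\ell,\ell]$ then gives
$$
g(0)>\tfrac12\bigl(g(-\ell)+g(\ell)\bigr),
$$
which is the claimed inequality. The only step needing care is choosing the parameters so that the three constraints hold simultaneously: the positivity floor $D_0\ge\tfrac12$, the curvature bound $D_0''\ge\tfrac12$, and the sign condition $2D_0D_0''<(D_0')^2$. That they are compatible comes down to the slope term $kX$ dominating near $0$ while $|X|$ stays small; all remaining verifications are elementary calculus.
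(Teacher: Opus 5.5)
Your argument is correct and is essentially the paper's proof. The paper takes $D_0(X)=1+2X+\frac12X^2$ where you take $1+2X+\frac14X^2$; both proofs check $G''(0)<0$ for $G=\sqrt{D_0}$ through $\frac{2D_0D_0''-(D_0')^2}{4D_0^{3/2}}$, then shrink $\ell_0$ to get $D_0\ge\frac12$ and the strict midpoint inequality. Two harmless remarks: for any quadratic $D_0$ the numerator $2D_0D_0''-(D_0')^2$ is constant (it equals $-3$ for your choice), so $\sqrt{D_0}$ is strictly concave wherever $D_0>0$ and the continuity step is unnecessary; and your floor estimate should read $D_0\ge 1-4\ell_0$, which is still conservative enough.
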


\begin{proof}
Let, for instance,
$$
D_0(X)=1+2X+\frac 1 2 X^2.
$$
Then $D_0''(X)=1>0$.  Also $D_0(0)=1$ and $D_0'(0)=2$.  For
$G(X):=\sqrt{D_0(X)}$, a direct calculation gives
$$
G''(0)=\frac{2D_0(0)D_0''(0)-D_0'(0)^2}{4D_0(0)^{3/2}}
=\frac{2-4}{4}<0.
$$
Thus $G$ is strictly concave in a neighborhood of $0$.  Choose $\ell_0>0$ small enough that $D_0\ge \frac 1 2$ on $(-2\ell_0,2\ell_0)$, and for any $0<\ell<\ell_0$
$$
G(0)>\frac{G(-\ell)+G(\ell)}2.
$$
This yields the desired midpoint inequality.
\end{proof}

\begin{lemma}\label{lem:formal-midpoint}
For $h_c:=h(A_c)$ defined in \Cref{sec:fold-theory}, there exist an open interval $J\subset\mathbb R$, a smooth function
$H:J\to(0,h_c)$, and three points
$$
X_-<X_0<X_+, \qquad X_0=\frac{X_-+X_+}{2},
$$
with the following properties.
\begin{enumerate}[(i)]
    \item $ H''(X)<0$ for every $X\in J.$
    \item For every $X\in J$, the half-width $H(X)$ belongs to the stable lower branch half-width interval.
    \item If $ Y(X):=\mathcal Y_t(H(X)),$
then
$$
\frac{Y(X_-)+Y(X_+)}2>Y(X_0).
$$
\end{enumerate}
Moreover, $H$ may be chosen so that
$$
\sup_{X\in J}H(X)<h_c.
$$
\end{lemma}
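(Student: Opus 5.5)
The plan is to take $H(X):=h_c-\kappa D_0(X)$ for a small scaling parameter $\kappa>0$ to be fixed. Here $D_0$ and $\ell_0$ come from Lemma~\ref{lem:sloping-gap}, and we set $J:=(-2\ell_0,2\ell_0)$. Since $D_0''\ge \tfrac12$, we get $H''=-\kappa D_0''\le -\kappa/2<0$, which gives (i).

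For (ii), note that $D_0\ge \tfrac12$ and that $D_0$ is bounded above on $J$ (shrink $J$ slightly to a compact-closure interval if needed, say $M:=\sup_J D_0<\infty$). Then
$$
\frac{\kappa}{2}\le h_c-H(X)\le \kappa M .
$$
Choosing $\kappa M<\delta_{\rm br}$ puts $H(X)$ in $(h_c-\delta_{\rm br},h_c)$, which is inside the stable lower-branch interval. The same bound gives $\sup_J H\le h_c-\kappa/2<h_c$, and $H>0$ for small $\kappa$.

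For (iii), fix $\ell\in(0,\ell_0)$ and take $X_\pm=\pm\ell$, $X_0=0$. By Lemma~\ref{lem:sloping-gap} there is a fixed gap
$$
\gamma:=\sqrt{D_0(0)}-\tfrac12\bigl(\sqrt{D_0(-\ell)}+\sqrt{D_0(\ell)}\bigr)>0,
$$
which does not depend on $\kappa$. Proposition~\ref{prop:sqrt-response} gives
$$
Y(X)=Y_c-c_t\sqrt{\kappa}\sqrt{D_0(X)}+R(X),\qquad |R(X)|\le \varepsilon(\kappa M)\sqrt{\kappa M},
$$
where $\varepsilon(s)\to0$ as $s\to0^+$ encodes the $o(\sqrt{h_c-h})$ term, applied at the three points. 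Hence
$$
\frac{Y(X_-)+Y(X_+)}2-Y(X_0)\ge c_t\sqrt\kappa\,\gamma-2\varepsilon(\kappa M)\sqrt{\kappa M}.
$$
Dividing by $\sqrt\kappa$, the right side is positive once $\kappa$ is small enough that $2\varepsilon(\kappa M)\sqrt M<c_t\gamma$.

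The main (mild) obstacle is this uniformity. The remainder has to be controlled by a quantity depending only on $h_c-h$, so that all three points can be treated simultaneously. This works because $D_0$ is bounded above and below on the three points. The key feature is that the midpoint-convexity gap scales like $\sqrt\kappa$, which is exactly the same order as the leading term, while the error is $o(\sqrt\kappa)$. Fixing $\kappa$ last completes the choice of $J$, $H$ and $X_-<X_0<X_+$.
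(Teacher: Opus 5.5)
Your proposal is correct and follows the paper's proof essentially step for step. The paper also takes $H_\delta=h_c-\delta D_0$ on $J=(-2\ell_0,2\ell_0)$, with $X_\pm=\pm\ell$ and $X_0=0$, and extracts the midpoint gap from the $\sqrt\delta$-order leading term of Proposition~\ref{prop:sqrt-response}. Your uniform bound on the $o(\sqrt{h_c-h})$ remainder, using $h_c-H\le\kappa M$, spells out what the paper does pointwise at the three fixed points.
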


\begin{proof}
Let $D_0$, $\ell_0$ be given by
Lemma~\ref{lem:sloping-gap}. Choose  $0<\ell <\ell_0$ and set 
$$X_-=-\ell \qquad X_0=0 \qquad X_+=\ell$$
as in the lemma.  For a small parameter $\delta>0$, set
$$
D_\delta(X):=\delta D_0(X),
\qquad
H_\delta(X):=h_c-D_\delta(X),
\qquad
X\in(-2\ell_0,2\ell_0).
$$
Then $0<H_\delta(X)<h_c$ as $D_0>0$ is bounded. 
Moreover, since $D_0>0$ and $D_0''>0$, we have
$$
H_\delta(X)<h_c, \qquad H_\delta''(X)=-\delta D_0''(X)<0.
$$
This gives (i). 

For $\delta>0$ sufficiently small, all values of $H_\delta$ lie in the stable lower-branch half-width interval on which  Proposition~\ref{prop:sqrt-response} applies. This gives (ii).

Set $Y_\delta(X):=\mathcal Y_t(H_\delta(X)).$
At the three fixed points $X_-,X_0,X_+$, the square-root expansion in Proposition~\ref{prop:sqrt-response} gives
$$
Y_\delta(X_j) = Y_c-c_t\sqrt{\delta}\,\sqrt{D_0(X_j)}+o(\sqrt\delta),
\qquad j\in\{-,0,+\},
$$
as $\delta\to 0^+$.  Therefore
\begin{multline*}
\frac{Y_\delta(X_-)+Y_\delta(X_+)}2-Y_\delta(X_0)
=  \\
-c_t\sqrt\delta
\left(\frac{\sqrt{D_0(X_-)}+\sqrt{D_0(X_+)}}2-
\sqrt{D_0(X_0)}\right)
+o(\sqrt\delta).
\end{multline*}
Observe that, the expression in parentheses is strictly negative by Lemma~\ref{lem:sloping-gap}.
Then since $c_t>0$, the right-hand side is strictly positive for all sufficiently
small $\delta>0$.  Fix such a value of $\delta$, and let
$$
J:=(-2\ell_0,2\ell_0), \qquad H:=H_\delta.
$$
Then (iii) holds. Finally, since $D_0\ge 1/2$ on $J$, 
$$\sup_{J} H_\delta\le h_c-\frac \delta 2 <h_c.$$
\end{proof}

\section{Construction of counterexamples}

\subsection{Construction of the uniformly convex domains}\label{sec:domains}

Let $H$, $J$, and $X_-,X_0,X_+$ be given by Lemma~\ref{lem:formal-midpoint}.  According to (ii) of Lemma~\ref{lem:formal-midpoint}, the values of $H$ on $J$ lie in the corresponding lower-branch half-width interval of $h_c$,  on which the one-dimensional
profiles are stable.  Choose a compact interval $J_0\subset\subset J$ that contains the
three points $X_-$, $X_0$, and $X_+$.

We also choose $h_*$ so close to $h_c$ that 
\begin{equation}\label{eq:h-star} 
\sup_{\overline{J_0}}H<h_*<h_c \quad  \text{ and } \quad
\left[\min_{X\in\overline{J_0}}H(X),\,h_*\right]
\subset (h_c-\delta_{\rm br},h_c).    
\end{equation}
This is possible since the values of $H$ were chosen sufficiently close to $h_c$ from below.
We will use the one-dimensional stable solution of half-width $h_*$ as a global supersolution.


By Lemma~\ref{lem:cap-completion}, applied with this $J_0$, there exists a bounded smooth uniformly convex set
$\mathcal O\subset\mathbb R^2$ such that
\begin{equation}\label{eq:O-inside-strip}
\overline{\mathcal O}\subset\{(X,\eta):|\eta|<h_*\},
\end{equation}
and
\begin{equation}\label{eq:O-exact-channel}
\mathcal O\cap (J_0\times\mathbb R)
=
\{(X,\eta):X\in J_0,\ |\eta|<H(X)\}.
\end{equation}

For $\epsilon>0$, define the horizontal dilation map
$$
T_\epsilon(X,\eta):=\left(\frac{X}{\epsilon},\eta\right),
$$
and set $\Omega_\epsilon:=T_\epsilon(\mathcal O).$
Since for every $\epsilon>0$, $T_\epsilon$ is a nonsingular affine map. Hence it preserves smoothness and  convexity. As $\partial \mathcal O$ is compact and has strictly positive curvature, then $\Omega_\epsilon$ also has strictly positive curvature. Thus $\Omega_\epsilon$ is uniformly convex, with a curvature lower bound that may depend on $\epsilon$. 
Moreover,
$$
\overline{\Omega_\epsilon}
\subset
\{(x_1,x_2): |x_2|<h_*\}.
$$
By \eqref{eq:O-exact-channel}, the central part of the scaled domain is exactly the slow channel:
$$
\Omega_\epsilon\cap\{(x_1,x_2):\epsilon x_1\in J_0\}
=\{(x_1,x_2):\epsilon x_1\in J_0,\ |x_2|<H(\epsilon x_1)\}.
$$
The three slow points from Lemma~\ref{lem:formal-midpoint} correspond to the
physical horizontal locations
$$
x_-:=\frac{X_-}{\epsilon}, \qquad
x_0:=\frac{X_0}{\epsilon}, \qquad
x_+:=\frac{X_+}{\epsilon}.
$$
These will be used in the final midpoint test for the actual superlevel set.

\subsection{The minimal solution and strict stability}\label{sec:solution-stability}

In this section, we construct the solution in the domains
$\Omega_\epsilon$ and establish a strict stability estimate, which will also be
employed subsequently in the slow-channel comparison later. We adopt the notation introduced in Section~\ref{sec:domains}. In particular, we fix $h_*\in(0,h_c)$ satisfying \eqref{eq:h-star} to be a stable lower-branch
half-width  and assume that
$$
\overline{\mathcal O}\subset \{(X,\eta): |\eta|<h_*\}.
$$

Let $\bar A\in(0,A_c)$ satisfy $h(\bar A)=h_*,$
and define
$$
\overline U(\xi):=\mathcal U_{h_*}(\xi)=U_{\bar A}(\xi),
\qquad -h_*\le \xi\le h_*.
$$
Thus
$$
-\overline U''=f(\overline U)\quad\text{in }(-h_*,h_*),
\qquad
\overline U(\pm h_*)=0,
\qquad
\overline U>0\quad\text{in }(-h_*,h_*).
$$
Since $h_*$ lies on the stable lower branch, the number
\begin{equation}\label{eq:defn-mu-star}
    \nu_*:=\lambda_1\left(-\frac{d^2}{d\xi^2}-f'(\overline U),\,(-h_*,h_*)\right)
\end{equation}
is strictly positive.

The following lemma is well-known. For the convenience of the reader, we give a quick proof here. 

\begin{proposition} \label{prop:minimal-stability}
Let $f\in C^3([0,\,\infty))$ with $f>0$, $f'\ge 0$ and $f''\ge 0$.  For every $\epsilon>0$ small, the problem
$$
\begin{cases}
-\Delta u=f(u) & \text{in }\Omega_\epsilon, \\
u>0 & \text{in }\Omega_\epsilon, \\
u=0 & \text{on }\partial\Omega_\epsilon,
\end{cases}
$$
has a minimal classical solution $u_\epsilon$. Moreover,
$$
0<u_\epsilon(x_1,x_2)\le \overline U(x_2)
\qquad\text{for every }(x_1,x_2)\in\Omega_\epsilon,
$$
and $u_\epsilon$ is strictly stable.  More precisely, for every
$\phi\in W_0^{1,2}(\Omega_\epsilon)$,
\begin{equation}\label{eq:strict-stability}
\int_{\Omega_\epsilon}
\left(|\nabla \phi|^2-f'(u_\epsilon)\phi^2\right)\,dx_1\,dx_2
\ge \nu_*\int_{\Omega_\epsilon}\phi^2\,dx_1\,dx_2.
\end{equation}
\end{proposition}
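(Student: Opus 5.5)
The plan is the standard monotone iteration between the subsolution $0$ and the supersolution $\overline U(x_2)$, followed by a comparison argument for stability. Since $\overline{\Omega_\epsilon}\subset\{|x_2|<h_*\}$, the function $\bar w(x_1,x_2):=\overline U(x_2)$ is smooth on $\overline{\Omega_\epsilon}$ and satisfies $-\Delta\bar w=-\overline U''(x_2)=f(\bar w)$ there. It is strictly positive on $\partial\Omega_\epsilon$, because $\overline U>0$ on $(-h_*,h_*)$. Hence $\bar w$ is a supersolution and $0$ is a subsolution, since $f(0)>0$. Starting from $u^0=0$, I define $u^{k+1}$ by $-\Delta u^{k+1}=f(u^k)$ in $\Omega_\epsilon$, $u^{k+1}=0$ on $\partial\Omega_\epsilon$. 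Because $f'\ge0$, the maximum principle gives $0\le u^k\le u^{k+1}\le \bar w$ by induction. Elliptic (Schauder) estimates then let the increasing limit $u_\epsilon$ be a classical solution. It satisfies $u_\epsilon>0$ by the strong maximum principle, since $f>0$, and $u_\epsilon\le \overline U(x_2)$.

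Minimality follows by the same induction. If $v$ is any positive classical solution, then $v\ge 0=u^0$, and $u^k\le v$ implies $-\Delta(v-u^{k+1})=f(v)-f(u^k)\ge0$. Hence $u^{k+1}\le v$ for all $k$, and so $u_\epsilon\le v$.

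For strict stability I compare potentials. Since $f''\ge0$, $f'$ is nondecreasing, and $0<u_\epsilon\le\overline U(x_2)$ gives $f'(u_\epsilon(x_1,x_2))\le f'(\overline U(x_2))$ pointwise. Take $\phi\in W^{1,2}_0(\Omega_\epsilon)$ and extend it by zero. For a.e.\ $x_1$, the slice $\phi(x_1,\cdot)$ lies in $W^{1,2}_0$ of the vertical section, which is an interval contained in $(-h_*,h_*)$. Zero extension then puts it in $W^{1,2}_0(-h_*,h_*)$. The definition \eqref{eq:defn-mu-star} of $\nu_*$ gives
\[
\int_{-h_*}^{h_*}\bigl(|\partial_{x_2}\phi|^2-f'(\overline U(x_2))\phi^2\bigr)\,dx_2\ge \nu_*\int_{-h_*}^{h_*}\phi^2\,dx_2 .
\]
Integrating this over $x_1$, discarding $|\partial_{x_1}\phi|^2\ge0$, and using the pointwise potential comparison yields \eqref{eq:strict-stability}. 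Since $\nu_*>0$, $u_\epsilon$ is strictly stable.

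The only delicate point is the slicing step, namely that a.e.\ slice of a $W^{1,2}_0$ function is in $W^{1,2}_0$ of the section. I would handle it by density. First prove the inequality for $\phi\in C_c^\infty(\Omega_\epsilon)$, where the slices are trivially compactly supported smooth functions. Then pass to the limit using boundedness of $f'(u_\epsilon)$ on the bounded set $\Omega_\epsilon$. Note that none of this needs $\epsilon$ small: it holds for every $\epsilon>0$.
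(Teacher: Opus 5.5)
Your proposal is correct and follows the paper's proof essentially step for step: monotone iteration between the subsolution $0$ and the supersolution $\overline U(x_2)$, minimality by the same induction, and strict stability via the pointwise bound $f'(u_\epsilon)\le f'(\overline U(x_2))$, vertical slicing into $W_0^{1,2}(-h_*,h_*)$, and a density argument from smooth compactly supported test functions. The only cosmetic difference is in the limit step. The paper first uses $W^{2,q}$ estimates and Sobolev embedding, which is how one gets the Hölder control on $f(u^k)$ that a direct Schauder argument needs. Your remark that the argument works for every $\epsilon>0$, not only small ones, is also accurate.
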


\begin{proof}
We first construct the minimal solution. Since
$\overline{\Omega_\epsilon}\subset\{(x_1,x_2): |x_2|<h_*\}$, 
the function
$(x_1,x_2)\mapsto \overline U(x_2)$ is nonnegative on $\partial\Omega_\epsilon$ and satisfies
$$
-\Delta \overline U(x_2)=-\overline U''(x_2)=f(\overline U(x_2))
\qquad\text{in }\Omega_\epsilon.
$$
Thus $\overline U$ is a supersolution.  Moreover, the zero function is a subsolution as  $-\Delta0=0\le f(0)$.

We define a monotone sequence as following. Let  $u_0\equiv0$ and, for $k\ge0$, let
$u_{k+1}$ solve
$$
\begin{cases}
-\Delta u_{k+1}=f(u_k) & \text{in }\Omega_\epsilon,\\
u_{k+1}=0 & \text{on }\partial\Omega_\epsilon.
\end{cases}
$$
Standard linear elliptic theory gives $u_{k+1}\in C^{2,\alpha}
(\overline{\Omega_\epsilon})$ for every $\alpha\in(0,1)$; see, for instance, \cite[Chapter 6]{GT2001}. Since
$f$ is nondecreasing, the maximum principle gives
$$
0=u_0\le u_1\le u_2\le\cdots\le \overline U
\qquad\text{in }\Omega_\epsilon.
$$
Indeed, the upper bound follows by comparing $u_{k+1}$ with
$\overline U$: If $u_k\le \overline U$, then
$$
-\Delta(\overline U-u_{k+1})=f(\overline U)-f(u_k)\ge0,
$$
and $\overline U-u_{k+1}\ge0$ on $\partial\Omega_\epsilon$. The monotonicity
of the sequence follows similarly from
$$
-\Delta(u_{k+1}-u_k)=f(u_k)-f(u_{k-1})\ge0.
$$

The sequence is uniformly bounded in $L^\infty(\Omega_\epsilon)$ as
$0\le u_k\le \overline U$.  Moreover $f(u_k)$ is uniformly bounded in
$L^\infty(\Omega_\epsilon)$.  The global $W^{2,q}$-estimates for the
Dirichlet problem in a smooth bounded domain, for any fixed $1<q<\infty$, give a
uniform $W^{2,q}$-bound for $u_{k+1}$.  Choosing $q>2$, the Sobolev
embedding gives a uniform H\"older bound for the sequence.  Hence the Arzel\'a--Ascoli
theorem gives locally uniform, and in fact uniform, convergence along subsequences.
Since the sequence is monotone, the whole sequence converges uniformly to a
function $u_\epsilon$.  Passing to the limit in the equation gives a weak
solution.  Then the elliptic regularity and the smoothness of $f$   imply
that $u_\epsilon$ is a classical solution; see, for instance,
\cite[Chapters 6 and 9]{GT2001}.  The bounds pass to the limit, so
$0\le u_\epsilon\le \overline U$.  In addition, the strong maximum principle gives
$u_\epsilon>0$ in $\Omega_\epsilon$.

The same iteration procedure proves minimality as well. Let $w$ be any positive classical
solution of the same Dirichlet problem. Since $u_0=0\le w$, induction gives
$u_k\le w$ for every $k$. Passing to the limit yields
$u_\epsilon\le w$. Therefore $u_\epsilon$ is the minimal positive solution.

It remains to prove \eqref{eq:strict-stability}. Since $f''\ge0$, the function
$f'$ is nondecreasing. Since $u_\epsilon\le \overline U$, we have
$$
f'(u_\epsilon(x_1,x_2))\le f'(\overline U(x_2))
\qquad\text{for every }(x_1,x_2)\in\Omega_\epsilon.
$$
It is enough to prove the estimate for smooth compactly supported
$\phi$ and then pass to all of $W_0^{1,2}$ by density. Extend $\phi$ by zero to
the full strip $\mathbb R\times(-h_*,h_*)$. For $\mathcal H^1$-almost every $x_1$, the slice
$x_2\mapsto \phi(x_1,x_2)$ belongs to $W_0^{1,2}(-h_*,h_*)$. Therefore the
one-dimensional spectral inequality gives
$$
\int_{-h_*}^{h_*}
\left( |\partial_{x_2}\phi(x_1,x_2)|^2-f'(\overline U(x_2))\phi(x_1,x_2)^2\right)\,dx_2
\ge
\nu_*\int_{-h_*}^{h_*}\phi(x_1,x_2)^2\,dx_2.
$$
Integrating this inequality in $x_1$ and using $|\nabla\phi|^2\ge
|\partial_{x_2}\phi|^2$, we conclude \eqref{eq:strict-stability}. The proposition follows.
\end{proof}

We also need that the local lower-branch profiles in the slowly varying channel are dominated by the global supersolution $\overline U$.

\begin{lemma}\label{lem:profile-domination}
Let $I_{\rm br} \subset (0, h_c)$ be a stable lower-branch half-width interval containing
$$
\{H(X):X\in\overline{J_0}\}\cup\{h_*\}.
$$
Assume that $h\in I_{\rm br}$, $0<h\le h_*$, and let $\mathcal U_h$ denote the corresponding stable
one-dimensional profile on $(-h,h)$. Then
$$
0\le\mathcal  U_h(\xi)\le \overline U(\xi)
\qquad\text{for every } |\xi|<h.
$$
In particular,
$$
0\le \mathcal U_{H(X)}(\eta)\le \overline U(\eta)
$$
whenever $X\in J_0$ and $|\eta|<H(X)$.
\end{lemma}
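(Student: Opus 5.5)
The idea is to compare $\mathcal U_h$ with $\overline U$ on the smaller interval $(-h,h)$, using the convexity of $f$ together with the strict stability of the \emph{smaller} profile $\mathcal U_h$, rather than any monotonicity of the map $h\mapsto A_-(h)$ pointwise in $\xi$. The nonnegativity $\mathcal U_h\ge0$ is immediate from the construction in Lemma~\ref{lem:time-map}, since $\mathcal U_h=U_{A_-(h)}$ is positive on $(-h,h)$ and vanishes at $\pm h$. So only the upper bound needs an argument. If $h=h_*$, then $\mathcal U_h=\overline U$ by definition and there is nothing to prove, so I assume $h<h_*$.

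Set $w:=\overline U-\mathcal U_h$ on $[-h,h]$. Since $h<h_*$ and $\overline U>0$ on $(-h_*,h_*)$, the boundary values satisfy $w(\pm h)=\overline U(\pm h)>0$. Subtracting the two equations gives
$$-w''=f(\overline U)-f(\mathcal U_h).$$
Because $f''\ge0$, the tangent-line inequality $f(\overline U)-f(\mathcal U_h)\ge f'(\mathcal U_h)\,w$ holds pointwise. Note that this holds regardless of the sign of $w$, so no circularity arises. Hence
$$\mathcal L\, w:=-w''-f'(\mathcal U_h)\,w\ge0 \quad\text{in }(-h,h),$$
and $\mathcal L$ is exactly the linearized operator $\mathcal L_{A_-(h)}$ on $I_{A_-(h)}=(-h,h)$. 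Since $h\in I_{\rm br}$ lies on the stable lower branch, $\lambda_1(\mathcal L,(-h,h))>0$.

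The conclusion $w\ge0$ then follows from the maximum principle for operators with positive principal eigenvalue, proved by testing. Let $w^-:=\max(-w,0)$. Since $w>0$ near $\pm h$, the function $w^-$ lies in $W_0^{1,2}(-h,h)$. Multiplying $\mathcal L w\ge0$ by $w^-\ge0$ and integrating by parts (no boundary terms arise, because $w^-$ vanishes near the endpoints) gives
$$\int_{-h}^{h}\Bigl(|(w^-)'|^2-f'(\mathcal U_h)(w^-)^2\Bigr)\,d\xi\le0.$$
By the Rayleigh characterization, the left side is at least $\lambda_1\int (w^-)^2$. Hence $w^-\equiv0$, that is, $\mathcal U_h\le\overline U$ on $(-h,h)$.

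The ``in particular'' statement then follows by taking $h=H(X)$ for $X\in J_0$. By \eqref{eq:h-star}, $H(X)<h_*$, and $H(X)\in I_{\rm br}$ by hypothesis.

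The only delicate point is to apply stability to the smaller profile $\mathcal U_h$, whose potential $f'(\mathcal U_h)$ appears in the convexity inequality, and not to $\overline U$; the hypothesis $h\in I_{\rm br}$ is exactly what supplies this. Everything else in the argument is routine.
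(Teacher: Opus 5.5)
Your proof is correct, and it is a more direct variant of the paper's argument.

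The paper first runs the monotone iteration between $0$ and $\overline U$ on $(-h,h)$. This produces a minimal solution $m_h$ with $0\le m_h\le\overline U$. It then applies the convexity inequality $f(\mathcal U_h)-f(m_h)\le f'(\mathcal U_h)(\mathcal U_h-m_h)$, together with $\lambda_1(L_h,(-h,h))>0$, to $W=\mathcal U_h-m_h$. This gives $m_h=\mathcal U_h$, hence $\mathcal U_h\le\overline U$.

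You use exactly the same two ingredients: the tangent-line inequality based at the stable profile $\mathcal U_h$, and the positivity of its principal eigenvalue. But you apply them directly to $\overline U-\mathcal U_h$, so neither the iteration nor the minimality comparison is needed. Your remark that the tangent-line inequality holds regardless of the sign of $w$ is what makes this one-step comparison legitimate. The strict boundary inequality $w(\pm h)>0$ makes $w^-$ compactly supported, so the testing argument is clean.

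The paper's detour buys a little extra information: the lower-branch profile is the minimal solution on $(-h,h)$, which fits the paper's emphasis on minimal solutions. Your route is shorter and shows a slightly more general principle: a strictly stable solution lies below any supersolution that dominates it on the boundary.
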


\begin{proof}
The restriction of $\overline U$ to $(-h,h)$ is a supersolution for the
one-dimensional Dirichlet problem on $(-h,h)$, since
$-\overline U''=f(\overline U)$
and $\overline U(\pm h)>0$ if $h<h_*$. If $h=h_*$, the result follows immediately. Assume therefore $h<h_*$. Monotone iteration in the proof of  Proposition~\ref{prop:minimal-stability} between the subsolution $0$ and the supersolution $\overline U$ gives a minimal positive
solution $m_h$ on $(-h,h)$, and
$$
0\le m_h\le \overline U.
$$
We claim that $m_h=\mathcal U_h$. Since $\mathcal U_h$ is strictly stable on the lower
branch, the operator
$$
L_h:=-\frac{d^2}{d\xi^2}-f'(\mathcal U_h)
$$
satisfies the maximum principle on $(-h,h)$. By minimality, $m_h\le \mathcal U_h$. Set
$$
W:=\mathcal U_h-m_h\ge0.
$$
Then $W=0$ at $\xi=\pm h$, and by convexity of $f$,
$$
L_h W
=f(\mathcal U_h)-f(m_h)-f'(\mathcal U_h)(\mathcal U_h-m_h)\le0.
$$
Since $\lambda_1(L_h, (-h,h))>0$, the maximum principle for $L_h$ gives $W\le 0$. Since also $W\ge0$, we have
$W\equiv0$. Thus $\mathcal U_h=m_h$, and consequently $\mathcal U_h\le\overline U$.
The final assertion follows by taking $h=H(X)$.
\end{proof}

\subsection{A maximum-principle approximation in the slow channel}\label{sec:c0-approx}

We now prove analytic approximation estimate needed for the counterexample.  

Let $I=(X_L,X_R)$ be an open interval such that
$$
\{X_-,\,X_0,\,X_+\}\subset I\subset\subset J_0.
$$
Later we will estimate on a smaller interval $I'\subset\subset I$ that still contains
$X_-$, $X_0$, and $X_+$.  For $X\in I$, define the slow channel as
$$
\mathcal Q_I:=\{(X,\eta): |\eta|<H(X)\},
$$
and, for $I'\subset\subset I$, define
$$
\mathcal Q_{I'}:=\{(X,\eta): X\in I',\ |\eta|<H(X)\}.
$$

Since $I\subset\subset J_0$, the identity \eqref{eq:O-inside-strip} implies that $(X/\epsilon, \eta)\in \Omega_\epsilon$ whenever $(X,\eta)\in \mathcal Q_I$.
For $(X,\eta)\in\mathcal Q_I$, set
\begin{equation}\label{eq:defn-v}
    v_\epsilon(X,\eta):=u_\epsilon\left(\frac{X}{\epsilon},\eta\right).
\end{equation}
Then $v_\epsilon$ satisfies
$$
-\epsilon^2(v_\epsilon)_{XX}-(v_\epsilon)_{\eta\eta}=f(v_\epsilon) \qquad\text{in }\mathcal Q_I.
$$
Define the slowly varying one-dimensional profile
$$
V(X,\eta):=\mathcal U_{H(X)}(\eta), \qquad (X,\eta)\in\mathcal Q_I,
$$
which satisfies
$$
-V_{\eta\eta}=f(V) \qquad\text{in }\mathcal Q_I,
$$
and
$$
V(X,\pm H(X))=0.
$$
Moreover, by Proposition~\ref{prop:minimal-stability} and
Lemma~\ref{lem:profile-domination},
$$
0\le v_\epsilon(X,\eta)\le \overline U(\eta),
\qquad
0\le V(X,\eta)\le \overline U(\eta)
\quad\text{for }\ (X,\eta)\in\mathcal Q_I.
$$

We first record the following comparison principle.  

\begin{lemma} \label{lem:slow-channel-maximum}
Let $c\in L^\infty(\mathcal Q_I)$ satisfy
$$
0\le c(X,\eta)\le f'(\overline U(\eta))
\qquad\text{for a.e. }(X,\eta)\in\mathcal Q_I.
$$
Let $w\in W^{1,2}(\mathcal Q_I)\cap C(\overline{\mathcal Q_I})$ satisfy
$$
-\epsilon^2 w_{XX}-w_{\eta\eta}-c(X,\eta)w\ge0
$$
in the weak sense in $\mathcal Q_I$, and assume that
$$
w\ge0\qquad\text{on }\partial\mathcal Q_I.
$$
Then
$$
w\ge0\qquad\text{in }\mathcal Q_I.
$$
\end{lemma}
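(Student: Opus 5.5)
We will prove the comparison by testing the weak inequality against the negative part $w^-:=\max\{-w,0\}$ and using the vertical-slice spectral gap from the proof of Proposition~\ref{prop:minimal-stability}.

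First, $w^-$ is admissible as a test function. Since $w\in W^{1,2}(\mathcal Q_I)\cap C(\overline{\mathcal Q_I})$ and $w\ge0$ on $\partial\mathcal Q_I$, the function $w^-$ lies in $W^{1,2}(\mathcal Q_I)$ and vanishes on $\partial\mathcal Q_I$. It is therefore in $W^{1,2}_0(\mathcal Q_I)$. To see this, approximate by $(w+\tau)^-$ for $\tau\downarrow0$; by continuity up to the boundary these are compactly supported in $\mathcal Q_I$, and they converge to $w^-$ in $W^{1,2}$. The weak supersolution inequality holds for nonnegative test functions in $W^{1,2}_0$ by density. Testing with $w^-\ge0$ and using $\nabla w\cdot\nabla w^-=-|\nabla w^-|^2$ and $w\,w^-=-(w^-)^2$ gives
$$
\int_{\mathcal Q_I}\Bigl(\epsilon^2|\partial_X w^-|^2+|\partial_\eta w^-|^2-c\,(w^-)^2\Bigr)\,dX\,d\eta\le0 .
$$

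Next we bound the left-hand side from below. Drop the nonnegative term $\epsilon^2|\partial_Xw^-|^2$, and use $c\le f'(\overline U(\eta))$ together with $(w^-)^2\ge0$. This leaves
$$
\int_{\mathcal Q_I}\Bigl(|\partial_\eta w^-|^2-f'(\overline U(\eta))(w^-)^2\Bigr)\,dX\,d\eta\le0 .
$$
Extend $w^-$ by zero outside $\mathcal Q_I$. Since $H(X)<h_*$ on $I$, for a.e.\ $X\in I$ the slice $\eta\mapsto w^-(X,\eta)$ belongs to $W^{1,2}_0(-h_*,h_*)$. By the definition \eqref{eq:defn-mu-star} of $\nu_*$, each slice integral is therefore at least $\nu_*\int_{-h_*}^{h_*}(w^-)^2\,d\eta$. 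Integrating in $X$ gives $\nu_*\|w^-\|_{L^2(\mathcal Q_I)}^2\le0$. Because $\nu_*>0$ (the lower-branch stability at $h_*$), we conclude $w^-\equiv0$, that is, $w\ge0$ a.e. Continuity of $w$ then gives $w\ge0$ everywhere in $\mathcal Q_I$.

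The only delicate point is justifying $w^-\in W^{1,2}_0(\mathcal Q_I)$, which is where the hypothesis $w\in C(\overline{\mathcal Q_I})$ is used. The domain $\mathcal Q_I$ is a Lipschitz curvilinear rectangle, with graph boundaries on top and bottom and vertical sides at $X=X_L$ and $X=X_R$. The truncation argument above handles this without further regularity.

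The key structural point is that the bound depends only on the one-dimensional gap $\nu_*$. It is therefore uniform in $\epsilon$, since the anisotropic $\epsilon^2$ factor only multiplies a term that we discard.
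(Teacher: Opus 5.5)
Your proof is correct and follows the paper's argument: test the weak inequality with the negative part of $w$, apply the one-dimensional spectral gap $\nu_*$ on each vertical slice extended by zero to $(-h_*,h_*)$, and conclude that the negative part vanishes. The only differences are cosmetic. You discard the $\epsilon^2|(w_-)_X|^2$ term instead of carrying it to the end, and you justify $w_-\in W^{1,2}_0(\mathcal Q_I)$ explicitly via the truncations $(w+\tau)^-$, which the paper simply asserts.
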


\begin{proof}
Let $w_-:=\max\{-w,0\}.$
Since $w\ge0$ on $\partial\mathcal Q_I$, we have
$w_-\in W_0^{1,2}(\mathcal Q_I)$.  Testing the weak inequality with $w_-$
gives
$$
\int_{\mathcal Q_I}
\left(\epsilon^2 |(w_-)_X|^2+|(w_-)_\eta|^2-cw_-^2\right)\,dX d\eta
\le0.
$$
For almost every $X\in I$, extend the slice $\eta\mapsto w_-(X,\eta)$ by zero to
$(-h_*,h_*)$.  By the definition of $\nu_*$,
$$
\int_{-h_*}^{h_*}
\left(|\psi'(\xi)|^2-f'(\overline U(\xi))\psi(\xi)^2\right)\,d\xi
\ge
\nu_*\int_{-h_*}^{h_*}\psi(\xi)^2\,d\xi
$$
for every $\psi\in W_0^{1,2}((-h_*,h_*))$.  Applying this to the extended slice
and using $c\le f'(\overline U)$, we obtain
$$
\int_{\mathcal Q_I}
\left(|(w_-)_\eta|^2-cw_-^2\right)\,dX d\eta
\ge
\nu_*\int_{\mathcal Q_I}w_-^2\,dX d\eta.
$$
Consequently,
$$
\epsilon^2\int_{\mathcal Q_I}|(w_-)_X|^2\,dX d\eta
+ \nu_*\int_{\mathcal Q_I}w_-^2\,dX d\eta
\le0.
$$
Since $\nu_*>0$, this implies $w_-=0$.  Hence $w\ge0$ in
$\mathcal Q_I$.
\end{proof}

We  need the following auxiliary lemma. 

\begin{lemma}
\label{lem:slow-profile-xbounds}
Let $K=[h_1,h_2]\subset\subset(0,h_c)$ be a compact interval of half-widths contained in the stable lower branch. For $h\in K$, let $\mathcal U_h$ denote the lower-branch profile on $(-h,h)$, and define
$$
W(h,\sigma):=\mathcal U_h(h\sigma),
\qquad
(h,\sigma)\in K\times[-1,1].
$$
Then $W\in C^2(K\times[-1,1])$. 

Consequently, if $I\subset\subset J$, 
$H\in C^2(\overline I)$,   $H(\overline I)\subset K$, and 
$$
V(X,\eta):=\mathcal U_{H(X)}(\eta),
\qquad
(X,\eta)\in\mathcal Q_I:=\{(X,\eta):X\in I,\ |\eta|<H(X)\},
$$
then there exists a constant $C_V>0$, depending only on $f$, $K$, and
$\|H\|_{C^2(\overline I)}$, such that
$$
|V_{XX}(X,\eta)|\le C_V
\qquad\text{for every }(X,\eta)\in\mathcal Q_I.
$$
\end{lemma}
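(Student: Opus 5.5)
The plan is to write $W$ through the amplitude-parametrized family and use smooth dependence of ODE solutions on parameters. On $K$, the lower-branch inverse $A_-$ is $C^3$: $h$ is $C^3$ near the relevant amplitudes, $h'>0$ on the open lower branch, and $K\subset\subset(0,h_c)$ keeps us away from the fold where $h'$ vanishes. So
$$
W(h,\sigma)=U_{A_-(h)}(h\sigma).
$$
Extend $f$ to a $C^3$ function on $(-\delta,\infty)$, as in the remark after Lemma~\ref{lem:time-map}. Then $(A,\xi)\mapsto U_A(\xi)$ is the solution of an initial value problem with $C^3$ vector field and parameter-dependent initial data $(A,0)$. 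By standard differentiable dependence it is $C^3$ in $(A,\xi)$ on a neighborhood of the compact set $\{(A,\xi): A\in A_-(K),\ |\xi|\le h(A)\}$. This neighborhood is available because solutions continue slightly past $h(A)$ thanks to the extension. Composing with the $C^3$ map $(h,\sigma)\mapsto(A_-(h),h\sigma)$ gives $W\in C^3\subset C^2$ on $K\times[-1,1]$. Because $K$ and $[-1,1]$ are compact, all derivatives of $W$ up to order two are bounded by a constant depending only on $f$ and $K$.

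For the second assertion, write
$$
V(X,\eta)=W\bigl(H(X),\eta/H(X)\bigr)
$$
with $\sigma=\eta/H(X)\in(-1,1)$. Differentiating with the chain rule gives
$$
\partial_X\sigma=-\eta H'/H^2=-\sigma H'/H,
$$
so that
$$
V_X=W_h H'+W_\sigma\,\partial_X\sigma .
$$
Differentiating once more, $V_{XX}$ is a polynomial expression in
$$
W_{hh},\ W_{h\sigma},\ W_{\sigma\sigma},\ W_h,\ W_\sigma,\ H',\ H'',\ 1/H,\ \sigma .
$$
Here $|\sigma|\le1$, $H\ge h_1>0$ on $\overline I$, $|H'|,|H''|\le\|H\|_{C^2(\overline I)}$, and the $W$-derivatives are bounded by the first step. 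This yields the uniform bound $|V_{XX}|\le C_V$, with $C_V$ depending only on $f$, $K$ and $\|H\|_{C^2}$.

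The main point needing care is the joint regularity of $(A,\xi)\mapsto U_A(\xi)$ up to the endpoints $\xi=\pm h(A)$, i.e.\ on a closed domain whose shape depends on the parameter. I would handle it as above: extend $f$ below $0$ and invoke the smooth dependence theorem on an open set containing the closed region. The rescaling $\sigma=\xi/h$ then flattens the moving boundary, which is exactly why $W$ is defined on the fixed rectangle $K\times[-1,1]$. The $C^3$ regularity of $A_-$ on $K$ follows from the inverse function theorem, using $h\in C^3$ and $h'>0$ on the compact set $A_-(K)$. This positivity holds because $K$ lies strictly inside the interval where $h$ is invertible, away from $A_c$.
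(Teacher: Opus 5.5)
Your proposal is correct and follows essentially the same route as the paper: it writes $W(h,\sigma)=\widetilde U(A_-(h),h\sigma)$, uses differentiable dependence on a neighborhood of the closed region $\{A\in A_-(K),\ |\xi|\le h(A)\}$ (after extending $f$ below $0$) together with the inverse function theorem for $A_-$, and then bounds $V_{XX}$ by the chain rule in $\sigma=\eta/H(X)$, which is exactly the paper's $\Psi$-computation. Your $C^3$ claim is also fine since $f\in C^3$, but note that $h'>0$ on $A_-(K)$ comes from how the lower branch is defined ($A$ close to $A_c$, with $h''(A_c)<0$), not from invertibility of $h$ alone.
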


\begin{proof}
Let $A_-(h)$ denote the lower-branch inverse of the half-width map. Since
$K\subset\subset(0,h_c)$ is contained in the stable lower branch, the corresponding amplitudes form a compact interval
$$
A_-(K)\subset\subset(0,A_c).
$$
Let $\widetilde U(A,\xi)$ be the solution of the initial value problem
$$
-\widetilde U_{\xi\xi}=f(\widetilde U),
\qquad
\widetilde U(A,0)=A,
\qquad
\widetilde U_\xi(A,0)=0.
$$
By the smooth dependence of ordinary differential equations on initial data,
$\widetilde U$ is $C^2$ in $(A,\xi)$ on a neighborhood of the compact set
$$
\{(A,\xi):A\in A_-(K),\ |\xi|\le h(A)\}.
$$
Moreover, on the lower branch the map $A\mapsto h(A)$ is smooth and satisfies
$h'(A)>0$, so its inverse $h\mapsto A_-(h)$ is smooth on $K$. Therefore
$$
W(h,\sigma) = \mathcal U_h(h\sigma) 
= \widetilde U(A_-(h),h\sigma)  \in C^2(K\times[-1,1]).
$$

Now write $\sigma= {\eta}/{H(X)}.$
Then
$V(X,\eta)=W(H(X),\sigma).$
Define
$$
\Psi(h,\sigma):=W_h(h,\sigma)-\frac{\sigma}{h}W_\sigma(h,\sigma).
$$
Since $h\ge h_1>0$ on $K$ and $W\in C^2(K\times[-1,1])$, the function $\Psi$ is $C^1$ and bounded together with its first derivatives on $K\times[-1,1]$. Differentiating at a fixed  $\eta$ gives
$$
V_X(X,\eta)=H'(X)\Psi(H(X),\sigma),
$$
and
$$
V_{XX}(X,\eta)
=
H''(X)\Psi(H(X),\sigma)
+
(H'(X))^2
\left[
\Psi_h(H(X),\sigma)
- \frac{\sigma}{H(X)}\Psi_\sigma(H(X),\sigma)
\right].
$$
Observe that the right-hand side is uniformly bounded by a constant depending only on
$f$, $K$, and $\|H\|_{C^2(\overline I)}$. This proves the claim.
\end{proof}

\begin{proposition}\label{prop:c0-approx}
Let $I'\subset\subset I$.  There exist constants
$$
C>0,
\qquad
\epsilon_0>0,
\qquad
\kappa>0,
$$
depending only on $f$, on $h_*$, on the compact stable branch interval
containing $H(\overline I)$, on $\|H\|_{C^2(\overline I)}$, and on the
intervals $I'\subset\subset I$, such that, for every $\epsilon\in(0,\epsilon_0]$,
$$
\sup_{(X,\eta)\in\mathcal Q_{I'}}
|v_\epsilon(X,\eta)-V(X,\eta)|
\le C\epsilon^2.
$$
\end{proposition}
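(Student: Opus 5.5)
The plan is to compare $w:=v_\epsilon-V$ with an explicit barrier built from the first eigenfunction of the one-dimensional operator at half-width $h_*$, and then apply Lemma~\ref{lem:slow-channel-maximum} to $B\pm w$. The barrier is an $O(\epsilon^2)$ term plus a boundary layer at the vertical ends of $\mathcal Q_I$ that decays exponentially in $\operatorname{dist}(X,\partial I)/\epsilon$; the number $\kappa$ in the statement is its decay rate.

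\emph{Step 1: the linear equation for $w$.} Since $-V_{\eta\eta}=f(V)$ and $-\epsilon^2(v_\epsilon)_{XX}-(v_\epsilon)_{\eta\eta}=f(v_\epsilon)$, we get
\begin{equation*}
-\epsilon^2 w_{XX}-w_{\eta\eta}-c\,w=\epsilon^2 V_{XX},
\qquad
c(X,\eta):=\int_0^1 f'\bigl(V+s(v_\epsilon-V)\bigr)\,ds .
\end{equation*}
Both $v_\epsilon$ and $V$ lie in $[0,\overline U(\eta)]$, and $f'$ is nonnegative and nondecreasing. Hence $0\le c\le f'(\overline U(\eta))$, which is the coefficient condition in Lemma~\ref{lem:slow-channel-maximum}. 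By Lemma~\ref{lem:slow-profile-xbounds}, applied with a compact branch interval $K\supset H(\overline I)$, the right-hand side satisfies $|\epsilon^2V_{XX}|\le C_V\epsilon^2$.

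The boundary data are as follows. On the curved parts $\eta=\pm H(X)$ we have $w=0$, because both functions vanish there: $u_\epsilon$ vanishes on $\partial\Omega_\epsilon$, and by Lemma~\ref{lem:cap-completion} these arcs are genuine boundary of $\mathcal O$. On the vertical sides $X=X_L$ and $X=X_R$, we only know $|w|\le M:=\max\overline U$.

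\emph{Step 2: the barrier.} Let $\varphi_*>0$ be the first Dirichlet eigenfunction on $(-h_*,h_*)$, so that $-\varphi_*''-f'(\overline U)\varphi_*=\nu_*\varphi_*$. Since $\sup_{\overline I}H<h_*$, we have $\varphi_*\ge m_0>0$ on $[-\sup H,\sup H]$. Set $\kappa:=\sqrt{\nu_*/2}$ and
\begin{equation*}
B(X,\eta):=\Bigl(C_1\epsilon^2+\tfrac{M}{m_0}\bigl(e^{-\kappa(X-X_L)/\epsilon}+e^{-\kappa(X_R-X)/\epsilon}\bigr)\Bigr)\varphi_*(\eta).
\end{equation*}
Each exponential $e$ satisfies $-\epsilon^2 e_{XX}=-\kappa^2 e$. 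Using $c\le f'(\overline U)$ and $B\ge0$, this gives
\begin{equation*}
-\epsilon^2B_{XX}-B_{\eta\eta}-cB\;\ge\;C_1\epsilon^2\nu_* m_0+(\nu_*-\kappa^2)(\cdots)\;\ge\;C_1\nu_* m_0\,\epsilon^2 .
\end{equation*}
Choosing $C_1:=C_V/(\nu_* m_0)$ makes the right-hand side at least $C_V\epsilon^2$. On the boundary, $B\ge0$ on the curved parts and $B\ge M$ on the vertical sides.

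\emph{Step 3: comparison.} Apply Lemma~\ref{lem:slow-channel-maximum} to $B-w$ and to $B+w$, with the same coefficient $c$. Both functions are supersolutions, since $\mp\epsilon^2V_{XX}+C_V\epsilon^2\ge0$, and both are nonnegative on $\partial\mathcal Q_I$. Therefore $|w|\le B$ in $\mathcal Q_I$. On $\mathcal Q_{I'}$ we have $\min(X-X_L,X_R-X)\ge d:=\operatorname{dist}(I',\partial I)>0$, so $B\le \max\varphi_*\,(C_1\epsilon^2+2(M/m_0)e^{-\kappa d/\epsilon})\le C\epsilon^2$ once $\epsilon\le\epsilon_0$. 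The threshold $\epsilon_0$ is chosen so that $e^{-\kappa d/\epsilon}\le\epsilon^2$ for all $\epsilon\le\epsilon_0$.

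\emph{Main obstacle.} The delicate point is justifying the hypotheses of Lemma~\ref{lem:slow-channel-maximum}: $w\in W^{1,2}(\mathcal Q_I)\cap C(\overline{\mathcal Q_I})$, and the inequality holding in the weak sense up to the corners where the vertical sides meet the curves $\eta=\pm H$. I would handle this through Lemma~\ref{lem:slow-profile-xbounds}: it shows $V$ is $C^2$ up to the curved boundary with bounded $V_X$ and $V_{XX}$. Together with the smoothness of $u_\epsilon$ up to $\partial\Omega_\epsilon$, this gives the needed regularity, and the weak formulation can then be obtained by integrating by parts.
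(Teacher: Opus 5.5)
Your proposal is correct and follows essentially the same route as the paper: the same linearized equation for $v_\epsilon - V$ with $0\le c\le f'(\overline U)$, the same barrier $\bigl(C\epsilon^2+C'(E_L+E_R)\bigr)\varphi_*$ built from the first eigenfunction at half-width $h_*$ with $\kappa^2<\nu_*$, and Lemma~\ref{lem:slow-channel-maximum} applied to $B\pm w$. The differences are minor refinements of points the paper leaves implicit: you use the sharper bound $|w|\le \max\overline U$ on the vertical sides instead of the paper's $2M_*$, and you justify the $W^{1,2}\cap C(\overline{\mathcal Q_I})$ regularity that the comparison lemma requires.
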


\begin{proof}
Set $W_\epsilon:=v_\epsilon-V$
on $\mathcal Q_I$.  Since $V$ solves the one-dimensional equation in the
vertical variable,
$$
-\epsilon^2V_{XX}-V_{\eta\eta}-f(V)=-\epsilon^2V_{XX}.
$$
Subtracting the equation for $V$ from the equation for $v_\epsilon$, we
obtain
\begin{equation}\label{eq:difference-equation}
-\epsilon^2 (W_\epsilon)_{XX}-(W_\epsilon)_{\eta\eta}-c_\epsilon(X,\eta)W_\epsilon
=\epsilon^2 V_{XX}
\qquad\text{in }\mathcal Q_I,
\end{equation}
where
$$
c_\epsilon(X,\eta):=
\begin{cases}
\dfrac{f(v_\epsilon(X,\eta))-f(V(X,\eta))}{v_\epsilon(X,\eta)-V(X,\eta)},
& v_\epsilon(X,\eta)\ne V(X,\eta),\\
f'(V(X,\eta)),& v_\epsilon(X,\eta)=V(X,\eta).
\end{cases}
$$
Since $0\le v_\epsilon,V\le\overline U$ and $f'$ is nondecreasing,
\begin{equation}\label{eq:ceps-bound}
    0\le c_\epsilon(X,\eta)\le f'(\overline U(\eta))
\qquad\text{in }\mathcal Q_I.
\end{equation}


Choose a compact  interval
$K\subset\subset(0,h_c)$ such that $ H(\overline I)\subset K.$
By Lemma~\ref{lem:slow-profile-xbounds}, there exists a constant $C_V>0$,
depending only on $f$, $K$, and $\|H\|_{C^2(\overline I)}$, such that
\begin{equation}\label{eq:VXX-bound}
|V_{XX}(X,\eta)|\le C_V
\qquad\text{for every }(X,\eta)\in\mathcal Q_I.
\end{equation}

Now we start to construct barrier functions for $W_\epsilon$. 
Let $\varphi_*$ be the positive first Dirichlet eigenfunction of the operator
$$-\frac{d^2}{d\xi^2}-f'(\overline U)$$
on $(-h_*,h_*)$, normalized by $\max_{[-h_*,h_*]}\varphi_*=1. $
Thus, recalling \eqref{eq:defn-mu-star},
$$
-\varphi_*''-f'(\overline U)\varphi_*=\nu_*\varphi_*,
\qquad
\varphi_*>0\text{ in }(-h_*,h_*),
\qquad
\varphi_*(\pm h_*)=0.
$$
Since $\sup_{X\in\overline I}H(X)<h_*,$ then
there is a constant $m_\varphi>0$, depending only on $H|_{\overline I}$ and $h_*$, such that
$$
\varphi_*(\eta)\ge m_\varphi
\qquad\text{for every }(X,\eta)\in\mathcal Q_I.
$$
Using $c_\epsilon\le f'(\overline U)$, we have
$$
-\varphi_*''-c_\epsilon\varphi_*
= \nu_*\varphi_*+\bigl(f'(\overline U)-c_\epsilon\bigr)\varphi_*
\ge \nu_*\varphi_*.
$$
Choose $\kappa>0$ such that $0<\kappa^2<\nu_*.$
Define
$$
E_L(X):=e^{-\kappa(X-X_L)/\epsilon},
\qquad
E_R(X):=e^{-\kappa(X_R-X)/\epsilon}.
$$
Then
$$
\left(-\epsilon^2\partial_{XX}-\partial_{\eta\eta}-c_\epsilon\right)(E_L\varphi_*)
=E_L\left(-\kappa^2\varphi_*-\varphi_*''-c_\epsilon\varphi_*\right)
\ge 0,
$$
and the same inequality holds for $E_R\varphi_*$.

Let
\begin{equation}\label{eq:defn-M-star}
    M_*:=\|\overline U\|_{L^\infty((-h_*,h_*))}.
\end{equation}
Choose constants $C_0$ and $C_1$, depending only on the data listed in the
statement, so large that
$$
C_0\nu_* m_\varphi\ge C_V,
\qquad
C_1 m_\varphi\ge 2M_*.
$$
Set
$$
\mathcal B(X,\eta):=
C_0\epsilon^2\varphi_*(\eta)
+C_1\bigl(E_L(X)+E_R(X)\bigr)\varphi_*(\eta).
$$
The inequalities above imply
$$
\left(-\epsilon^2\partial_{XX}-\partial_{\eta\eta}-c_\epsilon\right) \mathcal B
\ge \epsilon^2 C_V
\ge \epsilon^2 \left| V_{XX}\right|
\qquad\text{in }\mathcal Q_I.
$$
Note that, on the top and bottom boundary arcs $\eta=\pm H(X)$, both $v_\epsilon$ and
$V$ vanish, and thus $W_\epsilon=0$.  On the artificial vertical sides $X=X_L$ and $X=X_R$,
the estimate $|W_\epsilon|\le2 M_*$ holds by \eqref{eq:defn-M-star}, while $\mathcal B\ge C_1m_\varphi\ge2 M_*$.
Therefore
$$
-\mathcal B\le W_\epsilon\le\mathcal B
\qquad\text{on }\partial\mathcal Q_I.
$$

Now the comparison principle in Lemma~\ref{lem:slow-channel-maximum} applies
due to \eqref{eq:ceps-bound}.
Applying that lemma to $\mathcal B-W_\epsilon$ and to
$\mathcal B+W_\epsilon$, and using \eqref{eq:difference-equation}, gives
$$
|W_\epsilon(X,\eta)|\le\mathcal B(X,\eta)
\qquad\text{in }\mathcal Q_I.
$$

Let $d_I:=\operatorname{dist}(I',\partial I)>0.$
For $X\in I'$,
$$
E_L(X)+E_R(X)
\le 2e^{-\kappa d_I/\epsilon}.
$$
After decreasing $\epsilon_0>0$ if necessary, we have
$$
e^{-\kappa d_I/\epsilon}\le \epsilon^2
\qquad\text{for }\ 0<\epsilon\le\epsilon_0.
$$
Consequently,
$$
|W_\epsilon(X,\eta)|\le C\epsilon^2
\qquad\text{for every }(X,\eta)\in\mathcal Q_{I'},
$$
where $C$ depends only on the stated data.  This proves the proposition.
\end{proof}

\subsection{Nonconvexity of the actual level}\label{sec:nonconvexity}

We now pass from the formal midpoint violation in
Lemma~\ref{lem:formal-midpoint} to the actual solution $u_\epsilon$.  The argument uses the $L^\infty$-estimate from Proposition~\ref{prop:c0-approx}. 

Choose open intervals $I'\subset\subset I\subset\subset J_0$ such that
$$
X_-,\ X_0,\ X_+\in I'.
$$
Recall that
$$
Y(X):=\mathcal Y_t(H(X)),
\qquad X\in I',
$$
is the upper height of the level $t$ of the slowly varying profile
$V(X,\eta)=\mathcal U_{H(X)}(\eta)$.  By Lemma~\ref{lem:formal-midpoint},
\begin{equation}\label{eq:formal-gap}
G:=\frac{Y(X_-)+Y(X_+)}2-Y(X_0)>0.
\end{equation}
Note that $G$ is independent of the parameter $\epsilon$ below. 

For $\epsilon>0$, let 
$$
E_\epsilon(t):=
\{(x_1,x_2)\in\Omega_\epsilon:u_\epsilon(x_1,x_2)>t\}.
$$
Then by pulling back with  horizontal dilation $T_\epsilon$, we define  superlevel set in the slow coordinates as
$$
S_\epsilon(t):=T_\epsilon^{-1}(E_\epsilon(t))=
\{(X,\eta): (X/\epsilon,\eta)\in\Omega_\epsilon,
\ u_\epsilon(X/\epsilon,\eta)>t\}.
$$
Namely, according to \eqref{eq:defn-v}
$$
S_\epsilon(t)=\{(X,\eta):v_\epsilon(X,\eta)>t\}.
$$
For every $X$ for which the vertical section is nonempty, define its \emph{upper height} by
$$
Y_\epsilon(X):=
\sup\{\eta\in\mathbb R:(X,\eta)\in S_\epsilon(t)\}.
$$

\begin{lemma} \label{lem:height-closeness}
There exist constants $C_h>0,$ and $\epsilon_h>0,$
depending only on
$$f,\, t,\, h_*,\, H|_{\overline I},\, I,\, I',$$
such that, for every $\epsilon\in(0,\epsilon_h]$ and each
$X\in\{X_-,X_0,X_+\}$, the number $Y_\epsilon(X)$ is well-defined and
$$
|Y_\epsilon(X)-Y(X)|\le C_h\epsilon^2.
$$
\end{lemma}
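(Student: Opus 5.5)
The plan is to convert the uniform $C\epsilon^2$ closeness of $v_\epsilon$ and $V$ from Proposition~\ref{prop:c0-approx} into closeness of level heights. The tool is a quantitative transversality estimate for the one-dimensional profiles near the level $t$. Fix $X\in\{X_-,X_0,X_+\}\subset I'$ and write $h=H(X)$, $A=A_-(h)$. The profile $\eta\mapsto V(X,\eta)=\mathcal U_h(\eta)$ is even and strictly decreasing on $(0,h)$. It crosses $t$ at $\eta=Y(X)$, and there
$$
\partial_\eta V(X,Y(X))=-\sqrt{2(F(A)-F(t))}
$$
by \eqref{eq:FU}. As $X$ ranges over the compact set $\overline{I'}$, the amplitudes $A_-(H(X))$ stay in a compact subset of $(t,A_c)$ by \eqref{eq:level-below-amplitude} and \eqref{eq:h-star}. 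Hence there are constants $r>0$ and $m>0$, depending only on $f$, $t$ and $H|_{\overline I}$, such that $\partial_\eta V(X,\eta)\le -m$ for $|\eta-Y(X)|\le r$, with $[Y(X)-r,Y(X)+r]\subset(0,H(X))$. We also use that $V(X,\eta)\le t-mr$ for $Y(X)+r\le \eta<H(X)$, and $V(X,\eta)\ge t+mr$ for $0\le\eta\le Y(X)-r$. Both follow from monotonicity of $V$ in $\eta$ on $(0,H(X))$. Evenness handles negative $\eta$: there $V(X,\eta)=V(X,|\eta|)\ge t+mr$ whenever $|\eta|\le Y(X)-r$.

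Next, fix $\epsilon\le\epsilon_h$, where $\epsilon_h\le\epsilon_0$ is chosen so that $C\epsilon_h^2<mr$ with $C$ from Proposition~\ref{prop:c0-approx}. Set $\tau:=C\epsilon^2/m\le r$. By \eqref{eq:O-exact-channel}, the vertical section of $S_\epsilon(t)$ at $X$ lies inside $(-H(X),H(X))$, and on it $|v_\epsilon-V|\le C\epsilon^2$.

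\textbf{Upper bound.} Take $\eta\in[Y(X)+\tau,H(X))$. If $\eta\le Y(X)+r$, the mean value theorem gives $V(X,\eta)\le t-m\tau=t-C\epsilon^2$. If $\eta>Y(X)+r$, then $V(X,\eta)\le t-mr<t-C\epsilon^2$. In both cases $v_\epsilon(X,\eta)\le t$, so $(X,\eta)\notin S_\epsilon(t)$. Points with $\eta\le -Y(X)-\tau$ are excluded by the same argument and symmetry of $V$, though only the supremum matters here.

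\textbf{Lower bound and well-definedness.} At $\eta=Y(X)-\tau'$ with $\tau<\tau'\le r$, we get $V(X,\eta)\ge t+m\tau'>t+C\epsilon^2$, so $v_\epsilon(X,\eta)>t$. Thus the section is nonempty, $Y_\epsilon(X)$ is well defined, and $Y_\epsilon(X)\ge Y(X)-\tau$ after letting $\tau'\downarrow\tau$. Combining the two bounds gives $|Y_\epsilon(X)-Y(X)|\le \tau=(C/m)\epsilon^2$, so $C_h:=C/m$ works.

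The main point to check carefully is the uniform transversality constant $m$. It requires the level $t$ to stay strictly below the amplitudes $A_-(H(X))$ with a margin that is uniform over $\overline{I'}$, so that $F(A)-F(t)$ is bounded below and $Y(X)$ stays a definite distance from $0$ and from $H(X)$. This follows from continuity of $A_-$ on the compact branch interval together with \eqref{eq:level-below-amplitude}. The remaining steps are elementary.
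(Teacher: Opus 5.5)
Your proposal is correct and follows essentially the same route as the paper: uniform transversality of the lower-branch profiles at the level $t$ over the compact set $H(\overline{I'})$, then conversion of the uniform $O(\epsilon^2)$ closeness from Proposition~\ref{prop:c0-approx} into $O(\epsilon^2)$ upper and lower bounds on the level height. The differences are cosmetic. You obtain the crossing slope explicitly from the first integral \eqref{eq:FU} rather than by compactness, and you take $C_h=C/m$ with a limit $\tau'\downarrow\tau$, where the paper instead chooses $K>C_0/m_t$ to get strict inequalities directly.
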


\begin{proof}
Set
$$
\mathcal K_{I'}:=H(\overline{I'}).
$$
By the choice of the branch neighborhood, $\mathcal K_{I'}$ is a compact subset
of $(h_c-\delta_{\rm br},h_c)$. In particular, by
\eqref{eq:level-below-amplitude},
$$
\min_{h\in\mathcal K_{I'}}\bigl(A_-(h)-t\bigr)>0.
$$
Thus, for every $h\in\mathcal K_{I'}$, the profile $\mathcal U_h$ satisfies
$$
\mathcal U_h(0)=A_-(h)>t, \qquad \mathcal U_h(h)=0<t.
$$
Since $\mathcal  U_h$ is strictly decreasing on $(0,h)$, there exists a unique point $\mathcal Y_t(h)\in(0,h)$ such that $\mathcal U_h(\mathcal Y_t(h))=t.$
Moreover, by the implicit function theorem and the smooth dependence of
$U_h$ on $h$, the map $h\mapsto \mathcal Y_t(h)$ is continuous on
$\mathcal K_{I'}$. Therefore, the upper height of the level set $t$ of $V$ 
$$Y(X)=\mathcal Y_t(H(X))$$ 
is continuous on $\overline{I'}$.

Next, since
$$
\mathcal U_h'(\mathcal  Y_t(h))<0
\qquad\text{for every }h\in\mathcal K_{I'},
$$
the compactness of $\mathcal K_{I'}$ gives
$$
\mu_t:= \min_{h\in\mathcal K_{I'}} \bigl(-\mathcal U_h'(\mathcal Y_t(h))\bigr)>0.
$$
Set
$$
d_t:=\frac12
\min_{h\in\mathcal K_{I'}}
\min\{\mathcal Y_t(h),\,h-\mathcal Y_t(h)\}>0.
$$
Choose $m_t>0$ so small that $4m_t\le \mu_t$. 
By the uniform continuity of
$\mathcal U_h'$, after decreasing $0<\rho_t<d_t$ if necessary, we may assume
$$
   -\mathcal U_h'(\eta)\ge 2m_t \quad \text{ 
whenever } \ h\in\mathcal K_{I'} \ \text{ and } \ |\eta-\mathcal Y_t(h)|\le \rho_t.
$$
Returning to $h=H(X)$, this gives
$$
-V_\eta(X,Y(X))\ge 2m_t, \qquad  Y(X)\ge 2d_t,  \qquad
H(X)-Y(X)\ge 2d_t
$$
for every $X\in\overline{I'}$. Finally, for $0\le s\le\rho_t$, integration of
the derivative bound gives
$$
V(X,Y(X)+s)-V(X,Y(X)) = \int_0^s V_\eta(X,Y(X)+r)\,dr  
\le -m_t s,
$$
and
$$
V(X,Y(X)-s)-V(X,Y(X)) = -\int_0^s V_\eta(X,Y(X)-r)\,dr
\ge m_t s.
$$
Since $V(X,Y(X))=t$, we obtain
\begin{equation}\label{eq:crossing-estimate}
\begin{aligned}
V(X,Y(X)+s)&\le t-m_t s,
\qquad 0\le s\le \rho_t,\\
V(X,Y(X)-s)&\ge t+m_t s,
\qquad 0\le s\le \rho_t.
\end{aligned}
\end{equation}

By Proposition~\ref{prop:c0-approx}, after decreasing $\epsilon_h$ if
necessary, there is a constant $C_0>0$, depending only on
$f,h_*,H|_{\overline I},I$, and $I'$, such that
\begin{equation}\label{eq:veps-V-diff}
    |v_\epsilon(X,\eta)-V(X,\eta)|\le C_0\epsilon^2
\qquad\text{for every }(X,\eta)\in\mathcal Q_{I'}
\end{equation}
whenever $0<\epsilon\le\epsilon_h$.  Choose
$K>\frac{C_0}{m_t}, $ and decrease $\epsilon_h$ further so that
\begin{equation}\label{eq:defn-K}
    K\epsilon^2\le \min\{\rho_t,d_t\} \qquad\text{for }\ 0<\epsilon\le\epsilon_h.
\end{equation}

Fix one of the three points $X\in\{X_-,X_0,X_+\}$.  We first prove an upper
bound for $Y_\epsilon(X)$. Let
$$
\eta\ge Y(X)+K\epsilon^2, \qquad \eta\le H(X).
$$
We have two cases. When
$$Y(X)+K\epsilon^2\le \eta\le Y(X)+\rho_t,$$ then the preceding crossing estimate \eqref{eq:crossing-estimate} gives
$$
V(X,\eta)\le t-m_tK\epsilon^2.
$$
If $\eta>Y(X)+\rho_t$, the monotonicity of $V(X,\cdot)$ on $(0,H(X))$ together with both \eqref{eq:crossing-estimate} and \eqref{eq:defn-K}  gives
$$
V(X,\eta) \le V(X,Y(X)+\rho_t)
\le t-m_t\rho_t \le t-m_tK\epsilon^2.
$$
Thus, in both of the cases, by \eqref{eq:veps-V-diff} and the choice on $K$,
$$
v_\epsilon(X,\eta)
\le t-(m_tK-C_0)\epsilon^2<t.
$$
It follows that no point with height at least $Y(X)+K\epsilon^2$ belongs to
$S_\epsilon(t)$ over this value of $X$.  Hence
$$
Y_\epsilon(X)
\le Y(X)+K\epsilon^2.
$$

For the lower bound, we first set $\eta=Y(X)-K\epsilon^2.$
Since $K\epsilon^2\le\rho_t$, the crossing estimate  \eqref{eq:crossing-estimate} gives
$$
V(X,\eta)
\ge t+m_tK\epsilon^2.
$$
Therefore, applying \eqref{eq:veps-V-diff} again yields
$$
v_\epsilon(X,\eta)
\ge t+(m_tK-C_0)\epsilon^2>t.
$$
The vertical section of $S_\epsilon(t)$ over this value of $X$ is therefore
nonempty, and
\begin{equation}\label{eq:height-lower-bound}
    Y_\epsilon(X)\ge Y(X)-K\epsilon^2.
\end{equation}
Taking $C_h:=K$ proves the lemma.
\end{proof}

\begin{proposition} \label{prop:actual-midpoint}
For all sufficiently small $\epsilon>0$, the superlevel set
$$
\{(x_1,x_2)\in\Omega_\epsilon:u_\epsilon(x_1,x_2)>t\}
$$
is not convex.
\end{proposition}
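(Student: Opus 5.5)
We argue by contradiction and combine Lemma~\ref{lem:convex-height} with Lemma~\ref{lem:height-closeness} and the formal gap \eqref{eq:formal-gap}. Since $T_\epsilon$ is a nonsingular linear map, the superlevel set $E_\epsilon(t)$ is convex if and only if $S_\epsilon(t)=T_\epsilon^{-1}(E_\epsilon(t))$ is convex. So suppose, for some small $\epsilon$, that $S_\epsilon(t)$ is convex.

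\emph{Applying the height lemma.} Because $S_\epsilon(t)\subset T_\epsilon^{-1}(\Omega_\epsilon)=\mathcal O$ and $\overline{\mathcal O}\subset\{|\eta|<h_*\}$, every vertical section of $S_\epsilon(t)$ is bounded above. By Lemma~\ref{lem:height-closeness}, for $\epsilon\le\epsilon_h$ the three points $X_-,X_0,X_+$ lie in $\pi_X(S_\epsilon(t))$, and the heights $Y_\epsilon(X_\pm),Y_\epsilon(X_0)$ are finite. Lemma~\ref{lem:convex-height} then says $Y_\epsilon=\mathcal H_{S_\epsilon(t)}$ is concave on the interval $\pi_X(S_\epsilon(t))$. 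This interval contains $[X_-,X_+]$, and $X_0$ is the midpoint of $X_-$ and $X_+$, so
$$
Y_\epsilon(X_0)\ge \frac{Y_\epsilon(X_-)+Y_\epsilon(X_+)}{2}.
$$

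\emph{Transferring to the formal heights.} The bound $|Y_\epsilon(X)-Y(X)|\le C_h\epsilon^2$ from Lemma~\ref{lem:height-closeness}, applied at the three points, gives
$$
Y(X_0)+C_h\epsilon^2\ge \frac{Y(X_-)+Y(X_+)}{2}-C_h\epsilon^2,
$$
that is, $G\le 2C_h\epsilon^2$. The gap $G>0$ in \eqref{eq:formal-gap} does not depend on $\epsilon$. So once $\epsilon<\min\{\epsilon_h,\sqrt{G/(2C_h)}\}$ we reach a contradiction, and $E_\epsilon(t)$ is not convex.

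The main subtlety is that Lemma~\ref{lem:height-closeness} controls heights only at the three chosen points, not on the whole interval. This is enough: the convexity of $S_\epsilon(t)$ is used directly through the midpoint inequality for $\mathcal H$, which Lemma~\ref{lem:convex-height} establishes pointwise from convex combinations of points of $S_\epsilon(t)$. No information about $Y_\epsilon$ away from $X_-,X_0,X_+$ is needed.

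It remains to check that the conclusion holds for the minimal strictly stable $u_\epsilon$ of Proposition~\ref{prop:minimal-stability}, with $t_0=t$. This is immediate, because $u_\epsilon$ is precisely the solution used in Lemma~\ref{lem:height-closeness}.
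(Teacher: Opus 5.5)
Your proof is correct and follows essentially the paper's proof: both combine the three-point estimate $|Y_\epsilon(X)-Y(X)|\le C_h\epsilon^2$ of Lemma~\ref{lem:height-closeness} with the midpoint concavity of the upper height function from Lemma~\ref{lem:convex-height} to contradict the $\epsilon$-independent gap $G>0$ of \eqref{eq:formal-gap}. The only differences are that you assume convexity before transferring to the formal heights, and that you explicitly verify the vertical sections are bounded above via $S_\epsilon(t)\subset\mathcal O\subset\{|\eta|<h_*\}$, which the paper leaves implicit.
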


\begin{proof}
Apply Lemma~\ref{lem:height-closeness} at the three slow points
$X_-,X_0,X_+$.  For $0<\epsilon\le\epsilon_h$,
$$
\begin{aligned}
\frac{Y_\epsilon(X_-)+Y_\epsilon(X_+)}2-Y_\epsilon(X_0)
&\ge
\frac{Y(X_-)+Y(X_+)}2-Y(X_0)-2C_h\epsilon^2\\
&=G-2C_h\epsilon^2,
\end{aligned}
$$
where $G>0$ is the formal gap in \eqref{eq:formal-gap}.  Taking
$\epsilon>0$ so small that
$4C_h\epsilon^2<G,$
we conclude that
\begin{equation}\label{eq:actual-gap}
\frac{Y_\epsilon(X_-)+Y_\epsilon(X_+)}2>Y_\epsilon(X_0).
\end{equation}

Now assume, for contradiction, that the superlevel set
$$
E_\epsilon(t):=
\{(x_1,x_2)\in\Omega_\epsilon:u_\epsilon(x_1,x_2)>t\}
$$
is convex.  As the affine image of a convex set is convex, so its slow-coordinate image
$$
S_\epsilon(t)=\{(X,\eta):v_\epsilon(X,\eta)>t\}.
$$
is also convex; recall \eqref{eq:defn-v}.  By Lemma~\ref{lem:convex-height}, the upper height function $Y_\epsilon$ of $S_\epsilon(t)$ is concave on its horizontal projection.  The preceding height lower bound \eqref{eq:height-lower-bound} shows that 
$$ (X, Y(X)-K\epsilon^2) \in S_\epsilon(t)\qquad \text{ for } \ X\in \{X_-,\, X_0,\,X_+\};$$
hence
$X_-$, $X_0$, and $X_+$ belong to this
projection.  Since $X_0=(X_-+X_+)/2$, the concavity of $Y_\epsilon$  gives
$$
Y_\epsilon(X_0)
\ge
\frac{Y_\epsilon(X_-)+Y_\epsilon(X_+)}2,
$$
which contradicts \eqref{eq:actual-gap}.  Hence $E_\epsilon(t)$ is not convex.
\end{proof}

\begin{proof}[Proof of Theorem~\ref{thm:main}]
Let $f$ be fold-admissible.  Choose the level $t\in(0,A_c)$, the slow width
$H$, and the three slow points $X_-,X_0,X_+$ as in
Lemma~\ref{lem:formal-midpoint}.  Construct the smooth bounded uniformly convex
domains $\Omega_\epsilon$ as in Section~\ref{sec:domains}.  For each
$\epsilon>0$, Proposition~\ref{prop:minimal-stability} gives a minimal
classical solution $u_\epsilon$ in $\Omega_\epsilon$, and this solution is
strictly stable in the sense of \eqref{eq:strict-stability}.  By
Proposition~\ref{prop:actual-midpoint}, for all sufficiently small
$\epsilon>0$, the superlevel set
$$
\{(x_1,x_2)\in\Omega_\epsilon:u_\epsilon(x_1,x_2)>t\}
$$
is not convex.  Fix such an $\epsilon$, and define
$$
\Omega := \Omega_\epsilon,\qquad u := u_\epsilon,\qquad t_0 := t.
$$
Then $\Omega$, $u$, and $t_0$ satisfy all the properties stated in the main theorem.
\end{proof}

\begin{proof}[Proof of Corollary~\ref{cor:model}]
For $f(s)=e^s$ and $f(s)=(a+s)^p$, with $a>0$ and $p>1$,
Proposition~\ref{prop:exp-fold}  and Proposition~\ref{prop:power-fold} prove  their fold-admissibility, respectively.  Applying Theorem~\ref{thm:main} gives the desired counterexample.  This proves the corollary.
\end{proof}

Recall Theorem~\ref{thm:existence}, which establishes the existence of stable solutions for the parameterized problems. We conclude this section with the following corollary concerning such parameterized problems.

\begin{corollary} \label{cor:parameter-version}
Let $f$ be fold-admissible and let $\lambda>0$.  Then there exist a bounded
smooth uniformly convex planar domain $\Omega_\lambda$, a number $t_0>0$, and
a minimal solution $u_\lambda$ of
$$
\begin{cases}
-\Delta u_\lambda=\lambda f(u_\lambda) & \text{in }\Omega_\lambda,\\
u_\lambda>0 & \text{in }\Omega_\lambda,\\
u_\lambda=0 & \text{on }\partial\Omega_\lambda,
\end{cases}
$$
such that $u_\lambda$ is strictly stable and $\{u_\lambda>t_0\}$ is not
convex.  In particular, this applies to $f(s)=e^s$ and to
$f(s)=(a+s)^p$ with $a>0$ and $p>1$.
\end{corollary}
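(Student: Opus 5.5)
The plan is to reduce the parameterized problem to the case $\lambda=1$, already settled in Theorem~\ref{thm:main}, by an elementary spatial rescaling. Given $\lambda>0$, apply Theorem~\ref{thm:main} to $f$. This yields a bounded smooth uniformly convex domain $\Omega$, a level $t_0>0$, and the minimal, strictly stable solution $u$ of \eqref{eq:intro-dirichlet} whose superlevel set $\{u>t_0\}$ is not convex. Then set
$$
\Omega_\lambda:=\lambda^{-1/2}\Omega,\qquad u_\lambda(y):=u(\sqrt\lambda\,y),\quad y\in\Omega_\lambda .
$$

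The verification proceeds as follows.

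First, the equation. A direct computation gives $-\Delta u_\lambda(y)=\lambda(-\Delta u)(\sqrt\lambda y)=\lambda f(u_\lambda(y))$. Positivity in the interior and the zero boundary values transfer immediately. Since a homothety preserves smoothness and multiplies curvature by $\sqrt\lambda$, the domain $\Omega_\lambda$ is still uniformly convex.

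Second, minimality. The map $v\mapsto v(\cdot/\sqrt\lambda)$ is a bijection between positive classical solutions of the $\lambda$-problem on $\Omega_\lambda$ and those of \eqref{eq:intro-dirichlet} on $\Omega$. It preserves pointwise order, so minimality of $u$ transfers to $u_\lambda$.

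Third, strict stability. For $\phi\in C_c^1(\Omega_\lambda)$, set $\psi(x):=\phi(x/\sqrt\lambda)$. The change of variables $x=\sqrt\lambda y$ in dimension two gives
$$
\int_{\Omega_\lambda}\bigl(|\nabla\phi|^2-\lambda f'(u_\lambda)\phi^2\bigr)\,dy=\int_\Omega |\nabla\psi|^2\,dx-\int_\Omega f'(u)\psi^2\,dx .
$$
By \eqref{eq:strict-stability} the right-hand side is at least $\nu_*\int_\Omega\psi^2\,dx=\nu_*\lambda\int_{\Omega_\lambda}\phi^2\,dy$. Hence $\lambda_1(-\Delta-\lambda f'(u_\lambda),\Omega_\lambda)\ge\lambda\nu_*>0$. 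Equivalently, eigenvalues simply scale by the factor $\lambda$.

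Fourth, nonconvexity. We have $\{u_\lambda>t_0\}=\lambda^{-1/2}\{u>t_0\}$. Homotheties preserve convexity and nonconvexity, so this set is not convex. The level $t_0$ is unchanged because only the spatial variable is rescaled.

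Finally, for $f(s)=e^s$ and $f(s)=(a+s)^p$, fold-admissibility is Propositions~\ref{prop:exp-fold} and~\ref{prop:power-fold}, so the corollary applies to these. There is no real obstacle in this argument. The only points needing care are keeping track of the Jacobian factor $\lambda$ in the stability quadratic form and noting that uniform convexity survives the homothety.
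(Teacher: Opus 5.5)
Your proposal is correct and follows the paper's proof essentially step for step. It uses the same homothety $\Omega_\lambda=\lambda^{-1/2}\Omega$, $u_\lambda(y)=u(\sqrt\lambda\,y)$, and transfers minimality, strict stability and nonconvexity by scaling, using the scale invariance of the two-dimensional Dirichlet energy; your sharper bound $\lambda_1\ge\lambda\nu_*$ is legitimate because the $u$ of Theorem~\ref{thm:main} is the $u_\epsilon$ of Proposition~\ref{prop:minimal-stability}, and in any case your remark that the first eigenvalue scales by the factor $\lambda$ already gives the conclusion from strict stability alone.
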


\begin{proof}
By Theorem~\ref{thm:main}, we choose a bounded smooth uniformly convex domain
$\Omega$, a strictly stable minimal solution $u$, and a level $t_0>0$ such
that
$$
-\Delta u=f(u)\quad\text{in }\Omega,
\qquad u=0\quad\text{on }\partial\Omega,
$$
and $\{u>t_0\}$ is not convex.  Define
$$
\Omega_\lambda:=\lambda^{-1/2}\Omega =\{x\in\mathbb R^2:\sqrt\lambda x\in\Omega\},
$$
and
$$
u_\lambda(x):=u(\sqrt\lambda x), \qquad x\in\Omega_\lambda.
$$
Then $\Omega_\lambda$ is bounded, smooth, and uniformly convex.  A direct
calculation gives
$$
-\Delta u_\lambda(x) =-\lambda (\Delta u)(\sqrt\lambda x)
=\lambda f(u(\sqrt\lambda x)) =\lambda f(u_\lambda(x)).
$$
Additionally, the boundary condition and positivity follows immediately.

Minimality is also preserved by the same scaling.  Indeed, if $w$ is any positive classical solution of
$$
-\Delta w=\lambda f(w)
\quad\text{in }\Omega_\lambda,
\qquad w=0\quad\text{on }\partial\Omega_\lambda,
$$
then
$$
W(X):=w(X/\sqrt\lambda), \qquad X\in\Omega,
$$
solves
$$
-\Delta W=f(W) \quad\text{in }\Omega,
\qquad W=0\quad\text{on }\partial\Omega.
$$
Since $u$ is minimal for the unscaled problem, $u\le W$ in $\Omega$.  Hence, $u_\lambda\le w$ in $\Omega_\lambda$  and $u_\lambda$ is minimal.

The strict stability also scales accordingly.  If $\phi\in C_c^1(\Omega_\lambda)$ and
$\Phi(X):=\phi(X/\sqrt\lambda)$, then the change of variables
$X=\sqrt\lambda x$ gives
$$
\int_{\Omega_\lambda}
\left(|\nabla\phi|^2-\lambda f'(u_\lambda)\phi^2\right)\,dx
= \int_\Omega \left(|\nabla\Phi|^2-f'(u)\Phi^2\right)\,dX.
$$
We note that the right-hand side is positive for every nonzero $\Phi\in W_0^{1,2}(\Omega)$, as $u$ is strictly stable.  Thus $u_\lambda$ is strictly stable. Finally,
$$
\{u_\lambda>t_0\}=\lambda^{-1/2}\{u>t_0\}.
$$
Since dilations preserve both convexity and nonconvexity, the superlevel set
$\{u_\lambda>t_0\}$ is not convex.  The final conclusion follows from Corollary~\ref{cor:model}.
\end{proof}

\end{document}